\date{}
\newtheorem{theorem}{Theorem}
\numberwithin{theorem}{section}
\newtheorem{proposition}[theorem]{Proposition}
\newtheorem{corollary}[theorem]{Corollary}
\newtheorem{example}[theorem]{Example}
\DeclareMathOperator*{\tr}{tr}
\newcommand{\s}{\mathbb{S}}
\newcommand{\R}{\mathbb{R}}
\newcommand\independent{\protect\mathpalette{\protect\independenT}{\perp}}
    \def\independenT#1#2{\mathrel{\rlap{$#1#2$}\mkern2mu{#1#2}}}
\title{\textbf{Gaussian Graphical Models: \\ An Algebraic and Geometric Perspective}}
\author{Caroline Uhler}
\begin{document}

\maketitle

\begin{abstract}
\noindent 
Gaussian graphical models are used throughout the natural sciences, social sciences, and economics to model the statistical relationships between variables of interest in the form of a graph. We here provide a pedagogic introduction to Gaussian graphical models and review recent results on maximum likelihood estimation for such \mbox{models.} Throughout, we highlight the rich algebraic and geometric properties of Gaussian graphical models and explain how these properties relate to convex optimization and ultimately result in insights on the existence of the maximum likelihood estimator (MLE) and algorithms for computing the MLE. % and  demonstrate their interplay with convex optimization and their importance for statistics and data science. 
\end{abstract}

\section{Introduction}

Technological advances and the information era allow the collection of massive amounts of data at unprecedented resolution. Making use of this data to gain insight into complex phenomena requires characterizing the relationships among a large number of variables. Gaussian graphical models explicitly capture the statistical relationships between the variables of interest in the form of a graph. These models are used throughout the natural sciences, social sciences, and economics, in particular in computational biology, finance, and speech recognition (see e.g.~[12, 40, 100]). 

As we will see in this overview, assuming Gaussianity leads to a rich geometric structure that can be exploited for parameter estimation. However, Gaussianity is not only assumed for mathematical simplicity. As a consequence of the central limit theorem, physical quantities that are expected to be the sum of many independent contributions often follow approximately a Gaussian distribution. For example, people's height is approximately normally distributed; height is believed to be the sum of many independent contributions from various genetic and environmental factors.

%simplifies the theory and

Another reason for assuming normality is that the Gaussian distribution has maximum entropy among all real-valued distributions with a specified mean and covariance. Hence, assuming Gaussianity imposes the least number of structural constraints beyond the first and second moments. So another reason for assuming Gaussianity is that it is the least-informative distribution. In addition, many physical systems tend to move towards maximal entropy configurations over time. 

In the following, we denote by $\s^p$ the vector space of real symmetric $p\times p$ matrices. This vector space is equipped with the \emph{trace inner product} $\,\langle A, B \rangle  := {\rm tr}(A B)$. In addition, we denote by $\s^p_{\succeq 0}$ the convex cone of positive semidefinite matrices. Its interior is the open cone $\s^p_{\succ 0}$ of positive definite matrices. A random vector $X\in \R^p$ is distributed according to the \emph{multivariate Gaussian distribution}  $\mathcal{N}(\mu,\Sigma)$ with parameters $\mu \in \R^p$ (the \emph{mean}) and $\Sigma\in \s^p_{\succ 0}$ (the \emph{covariance matrix}), if it has density function
$$
f_{\mu,\Sigma}(x)=(2\pi)^{-p/2}(\det\Sigma)^{-1/2}\exp\left\{ -\frac{1}{2} (x-\mu)^T\Sigma^{-1}(x-\mu)\right\}, \quad x\in \R^p.
$$
In the following, we denote the inverse covariance matrix, also known as the \emph{precision matrix} or the \emph{concentration matrix}, by $K$. In terms of $K$ and using the trace inner product on $\s^p$, the density $f_{\mu,\Sigma}$ can equivalently be formulated as:
$$
f_{\mu, K}(x)=\exp\left\{ \mu^T K x - \big\langle K, \frac{1}{2} xx^T\big\rangle -\frac{p}{2}\log(2\pi) + \frac{1}{2}\log\det(K) - \frac{1}{2}\mu^TK\mu\right\}.
$$
Hence, the Gaussian distribution is an \emph{exponential family} with \emph{canonical parameters} $(-\mu^TK, K)$, \emph{sufficient statistics} $(x,\frac{1}{2}xx^T)$ and \emph{log-partition function} (also known as the \emph{cumulant generating function}) $\frac{p}{2}\log(2\pi) - \frac{1}{2}\log\det(K) + \frac{1}{2}\mu^TK\mu$; see~\cite{Barndorff_1978, Brown_exponential} for an introduction to exponential families.

%A \emph{multivariate normal model} is a statistical model consisting of multivariate normal distributions:
%$$
%\mathcal{P}_{\Theta}=\{\mathcal{N}(\mu,\Sigma) \mid \theta=(\mu,\Sigma)\in \Theta\},
%$$
%where $\Theta\subseteq \R^m\times \s^m_{\succ 0}$. 

%The model where no conditional independence statements are assumed is called the \emph{saturated Gaussian model} and has $\Theta = \R^m\times \s^m_{\succ 0}$.

Let $G = (V,E)$ be an undirected graph with vertices $V = [p]$ and edges $E$, where  $[p] = \{1,\ldots,p\}$. A random vector $X \in \mathbb{R}^p$ is said to \emph{satisfy the (undirected) Gaussian graphical model with graph $G$}, if $X$ has a multivariate Gaussian distribution $\mathcal{N}(\mu,\Sigma)$ with 
$$\big(\Sigma^{-1}\big)_{i,j} = 0 \quad \textrm{for all } (i,j)\notin E.$$
Hence, the graph $G$ describes the sparsity pattern of the concentration matrix. This explains why $G$ is also known as the \emph{concentration graph}. As we will see in Section~\ref{sec:CI}, missing edges in $G$ also correspond to conditional independence relations in the corresponding Gaussian graphical model. Hence, sparser graphs correspond to simpler models with fewer canonical parameters and more conditional independence relations.

Gaussian graphical models are the continuous counter-piece to Ising models. Like Ising models, Gaussian graphical models are quadratic exponential families. These families only model the pairwise interactions between nodes, i.e., interactions are only on the edges of the underlying graph $G$. But nevertheless, Ising models and Gaussian graphical models are extremely flexible models; in fact, they can capture any pairwise correlation structure that can be constructed for binary or for continuous data. 

This overview discusses maximum likelihood (ML) estimation for Gaussian graphical models. There are two problems of interest in this regard: (1) to estimate the edge weights, i.e.~the canonical parameters, given the graph structure, and (2) to learn the underlying graph structure. This overview is mainly focussed with the first problem (Sections~\ref{sec:likelihood}-\ref{sec:MLE_alg}), while the second problem is only discussed in Section~\ref{sec:graph_learning}. The second problem is particularly important in the high-dimensional setting when the number of samples $n$ is smaller than the number of variables $p$. For an introduction to high-dimensional statistics see e.g.~\cite{Buehlmann_vandeGeer}.

The remainder of this overview is structured as follows: In Section~\ref{sec:CI}, we examine conditional independence relations for Gaussian distributions. Then, in Section~\ref{sec:likelihood}, we introduce the Gaussian likelihood. %and discuss ML estimation in the context of convex optimization. %While the problem of ML estimation for Gaussian graphical models is an optimization problem, it is also inherently geometric;  
We show that ML estimation for Gaussian graphical models is a convex optimization problem and we describe its dual optimization problem. In Section~\ref{sec:pd_completion}, we analyze this dual optimization problem and explain the close links to positive definite matrix completion problems studied in linear algebra. 
In Section~\ref{sec:geometry}, we develop a geometric picture of ML estimation for Gaussian graphical models that complements the point of view of convex optimization. %we embed  and we introduce the convex geometry behind this problem in Section~\ref{sec:geometry}. 
The combination of convex optimization, positive definite matrix completion, and convex geometry allows us to obtain results about the existence of the maximum likelihood estimator (MLE) and algorithms for computing the MLE. These are presented in Section~\ref{sec:MLE_existence} and in Section~\ref{sec:MLE_alg}, respectively. Gaussian graphical models are defined by zero constraints on the concentration matrix $K$. In Section~\ref{sec:graph_learning}, we describe \mbox{methods} for learning the underlying graph, or equivalently, the zero pattern of~$K$. Finally, in Section~\ref{sec:linear_Gaussian}, we end with a discussion of other Gaussian models with linear constraints on the concentration matrix or the covariance matrix.

\section{Gaussian distribution and conditional independence}
\label{sec:CI}

We start this section by reviewing some of the extraordinary properties of Gaussian distributions. The following result shows that the Gaussian distribution is closed under \mbox{marginalization} and conditioning. We here only provide proofs that will be useful in later sections of this overview. A complete proof of the following well-known result can be found for example in \cite{Anderson_2003, Eaton_1983}. 

\begin{proposition}
\label{prop_Gaussian}
Let $X\in\mathbb{R}^p$ be distributed as $\mathcal{N}(\mu, \Sigma)$ and partition the random vector $X$ into two components $X_A\in\mathbb{R}^a$ and $X_B\in\mathbb{R}^b$ such that $a+b=p$. Let $\mu$ and $\Sigma$ be partitioned accordingly, i.e.,
$$\mu = \begin{pmatrix} \mu_A \\ \mu_B\end{pmatrix} \quad \textrm{and} \quad \Sigma = \begin{pmatrix} \Sigma_{A,A} & \Sigma_{A,B} \\ \Sigma_{B,A} & \Sigma_{B,B}\end{pmatrix},$$
where, for example, $\Sigma_{B,B}\in\s^b_{\succ 0}$. Then,
\begin{enumerate}
\item[(a)] the marginal distribution of $X_A$ is $\;\mathcal{N}(\mu_A, \Sigma_{A,A})$;
\item[(b)] the conditional distribution of $X_A\mid X_B = x_B$ is $\;\mathcal{N}(\mu_{A\mid B}, \Sigma_{A\mid B})$, where
$$\mu_{A\mid B} = \mu_A + \Sigma_{A,B} \Sigma_{B,B}^{-1}(x_B-\mu_B) \quad \textrm{and} \quad \Sigma_{A\mid B} = \Sigma_{A,A} - \Sigma_{A,B}\Sigma_{B,B}^{-1}\Sigma_{B,A}.$$
\end{enumerate}
\end{proposition}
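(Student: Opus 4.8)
The plan is to prove both parts by a direct computation starting from the joint density, using the block structure of $\Sigma$ and, crucially, the block decomposition of its inverse via the Schur complement.

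First I would set up notation for the Schur complement. Let $S := \Sigma_{A,A} - \Sigma_{A,B}\Sigma_{B,B}^{-1}\Sigma_{B,A}$ (the quantity claimed to be $\Sigma_{A\mid B}$), which is well-defined since $\Sigma_{B,B}\in\s^b_{\succ 0}$ is invertible, and is itself positive definite because $\Sigma\in\s^p_{\succ 0}$. The key algebraic identity I would invoke is the block-$LDL^\top$ factorization
$$
\Sigma = \begin{pmatrix} I & \Sigma_{A,B}\Sigma_{B,B}^{-1} \\ 0 & I \end{pmatrix} \begin{pmatrix} S & 0 \\ 0 & \Sigma_{B,B} \end{pmatrix} \begin{pmatrix} I & 0 \\ \Sigma_{B,B}^{-1}\Sigma_{B,A} & I \end{pmatrix},
$$
which can be checked by multiplying out the blocks. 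Inverting this product immediately gives both $\det\Sigma = \det S \cdot \det\Sigma_{B,B}$ (the triangular factors have determinant one) and an explicit block form of $\Sigma^{-1}$ in terms of $S^{-1}$, $\Sigma_{B,B}^{-1}$, and $\Sigma_{A,B}\Sigma_{B,B}^{-1}$.

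Next I would substitute this into the quadratic form in the exponent of $f_{\mu,\Sigma}$. Writing $x - \mu = \big(\begin{smallmatrix} x_A - \mu_A \\ x_B - \mu_B\end{smallmatrix}\big)$ and changing variables to $u_A := (x_A - \mu_A) - \Sigma_{A,B}\Sigma_{B,B}^{-1}(x_B - \mu_B)$ and $u_B := x_B - \mu_B$ — which is exactly the change suggested by the triangular factors — the quadratic form $(x-\mu)^\top\Sigma^{-1}(x-\mu)$ separates as $u_A^\top S^{-1} u_A + u_B^\top \Sigma_{B,B}^{-1} u_B$ with no cross term. Combining this with the determinant factorization and the factor $(2\pi)^{-p/2} = (2\pi)^{-a/2}(2\pi)^{-b/2}$, the joint density factors as a product of an $\mathcal{N}(0, S)$-type density in $u_A$ and an $\mathcal{N}(\mu_B, \Sigma_{B,B})$ density in $x_B$. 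Integrating out $x_A$ (equivalently $u_A$, a volume-preserving shift for fixed $x_B$) leaves exactly $\mathcal{N}(\mu_B,\Sigma_{B,B})$ for the $X_B$-marginal; by the symmetric argument (swapping the roles of $A$ and $B$) one gets part (a). For part (b), dividing the joint density by the already-computed marginal $f_{\mu_B,\Sigma_{B,B}}(x_B)$ cancels the $x_B$-factor and leaves the density of $X_A\mid X_B = x_B$ as a Gaussian in $x_A$ with covariance $S = \Sigma_{A\mid B}$ and mean $\mu_A + \Sigma_{A,B}\Sigma_{B,B}^{-1}(x_B - \mu_B) = \mu_{A\mid B}$, reading off the shift from the definition of $u_A$.

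The main obstacle is purely bookkeeping rather than conceptual: one must carry the block matrix multiplications through carefully to verify the $LDL^\top$ identity and then track the completion-of-the-square change of variables so that every $2\pi$, determinant, and quadratic-form term lands in the right factor. There is nothing deep to overcome — positive definiteness of $\Sigma$ guarantees $\Sigma_{B,B}$ and $S$ are invertible (indeed positive definite, so they are genuine covariance matrices), and the Jacobian of the change of variables is $1$ — so the proof is a matter of organizing the computation cleanly; this is also why the paper only promises the parts of the argument it will reuse later and refers to \cite{Anderson_2003, Eaton_1983} for the full details.
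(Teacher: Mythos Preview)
Your argument is correct and essentially complete. It differs from the paper's in organization rather than in substance: the paper only writes out part~(b) and does so working directly with the precision matrix $K=\Sigma^{-1}$, expanding the conditional density $f(x_A\mid x_B)$ in terms of $K_{A,A}$ and $K_{A,B}$, completing the square, and only at the end invoking the Schur complement identities $K_{A,A}^{-1}=\Sigma_{A\mid B}$ and $K_{A,A}^{-1}K_{A,B}=-\Sigma_{A,B}\Sigma_{B,B}^{-1}$ to translate back to $\Sigma$. You instead start on the $\Sigma$ side, packaging those same identities into the block $LDL^\top$ factorization of $\Sigma$, which simultaneously gives you the determinant split and the separation of the quadratic form, and hence both (a) and (b) at once. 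Your route is a bit more self-contained and yields the marginal for free; the paper's route is shorter for (b) alone and keeps $K$ in the foreground, which is the notation the subsequent Corollary on conditional independence and the rest of the paper rely on.
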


\begin{proof}
We only prove (b) to demonstrate the importance of Schur complements when working with Gaussian distributions. Fixing $x_B$, we find by direct calculation that the conditional density $f(x_A\mid x_B)$ is proportional to:
\begin{align}
f(x_A\mid x_B) &\propto \exp\Big\{-\frac{1}{2}(x_A-\mu_A)^TK_{A,A}(x_A-\mu_A) -  (x_A-\mu_A)^TK_{A,B}(x_B-\mu_B)\Big\}\quad\quad\nonumber\\
&\propto \exp\Big\{-\frac{1}{2}\big(x_A-\mu_A-K_{A,A}^{-1}K_{A,B}(x_B-\mu_B)\big)^TK_{A,A}\label{eq_1}\\
&\hspace{4.8cm}\times\big(x_A-\mu_A-K_{A,A}^{-1}K_{A,B}(x_B-\mu_B)\big)\Big\},\quad\quad\nonumber
\end{align}
where we used the same partitioning for $K$ as for $\Sigma$. Using Schur complements, we obtain
$$K_{A,A}^{-1} \;=\; \Sigma_{A,A}-\Sigma_{A,B}\Sigma_{B,B}^{-1}\Sigma_{B,B},$$
and hence $K_{A,A} = \Sigma_{A\mid B}^{-1}$. Similarly, we obtain $\,K_{A,A}^{-1}K_{A,B} = -\Sigma_{A,B}\Sigma_{B,B}^{-1}$. Combining these two identities with the conditional density in (\ref{eq_1}) completes the proof.
\end{proof}

These basic properties of the multivariate Gaussian distribution have interesting implications with respect to the interpretation of zeros in the covariance and the concentration matrix. Namely, as described in the following corollary, zeros correspond to (\emph{conditional}) \emph{independence relations}. For disjoint subsets $A,B,C\subset [p]$ we denote the statement that $X_A$ is conditionally independent of $X_B$ given $X_C$ by $X_A\independent X_B\mid X_C$. If $C=\emptyset$, then we write $X_A\independent X_B$.

\begin{corollary}
\label{cor_Gaussian_CI}
Let $X\in\mathbb{R}^p$ be distributed as $\mathcal{N}(\mu, \Sigma)$ and let $i,j\in [p]$ with $i\neq j$. Then
\begin{enumerate}
\item[(a)] $X_i\independent X_j\;$ if and only if $\;\Sigma_{i,j}=0$;
\item[(b)] $X_i\independent X_j\mid X_{[p]\setminus\{i,j\}}\;$ if and only if $\;K_{i,j}=0\;$ if and only if $\;\det(\Sigma_{[p]\setminus\{i\}, [p]\setminus\{j\}})=0$.
\end{enumerate}
%$$\Sigma_{ij}=0 \Longleftrightarrow X_i\independent X_j  \quad \textrm{and} \quad  K_{ij}=0 \Longleftrightarrow X_i\independent X_j\mid X_{[p]\setminus\{i,j\}}.$$
%$$\Sigma_{ij}=0 \;\textrm{if and only if}\; X_i\independent X_j  \quad \textrm{and} \quad  K_{ij}=0 \;\textrm{if and only if}\; X_i\independent X_j\mid X_{[p]\setminus\{i,j\}}.$$
\end{corollary}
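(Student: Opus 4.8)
The plan is to derive both parts of Corollary~\ref{cor_Gaussian_CI} directly from Proposition~\ref{prop_Gaussian}, exploiting the fact that independence and conditional independence of jointly Gaussian variables are entirely governed by covariances.

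For part (a), I would apply Proposition~\ref{prop_Gaussian}(a) with $A = \{i,j\}$ to reduce to the bivariate Gaussian density of $(X_i,X_j)$. If $\Sigma_{i,j}=0$ then the $2\times 2$ covariance matrix $\Sigma_{A,A}$ is diagonal, so $\Sigma_{A,A}^{-1}$ is diagonal and the exponent in the density $f_{\mu_A,\Sigma_{A,A}}$ splits as a sum of a function of $x_i$ and a function of $x_j$; hence the density factors and $X_i \independent X_j$. Conversely, if $X_i \independent X_j$ then $\Sigma_{i,j} = \mathbb{E}[(X_i-\mu_i)(X_j-\mu_j)] = \mathbb{E}[X_i-\mu_i]\,\mathbb{E}[X_j-\mu_j] = 0$; this direction needs no Gaussianity at all.

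For the first equivalence in part (b), I would invoke Proposition~\ref{prop_Gaussian}(b) with $B = [p]\setminus\{i,j\}$ and $A = \{i,j\}$, so that conditioning on $X_B = x_B$ gives a bivariate Gaussian $\mathcal{N}(\mu_{A\mid B}, \Sigma_{A\mid B})$. Conditional independence $X_i \independent X_j \mid X_B$ holds iff, for every fixed $x_B$, the coordinates $X_i$ and $X_j$ are independent under this conditional distribution, which by part (a) (applied to the conditional law) happens iff $(\Sigma_{A\mid B})_{i,j}=0$. The point is then that $\Sigma_{A\mid B}$ is a $2\times 2$ Schur complement, and inverting a $2\times 2$ matrix shows $(\Sigma_{A\mid B})_{i,j}=0$ iff the off-diagonal entry of $(\Sigma_{A\mid B})^{-1}$ vanishes; but by the Schur complement identity already used in the proof of Proposition~\ref{prop_Gaussian}, $(\Sigma_{A\mid B})^{-1} = K_{A,A}$, whose off-diagonal entry is exactly $K_{i,j}$. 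Hence $X_i \independent X_j \mid X_B$ iff $K_{i,j}=0$. (One should note that $\mu_{A\mid B}$ depends affinely on $x_B$ but this does not affect the independence/factorization argument.)

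For the second equivalence in part~(b), I would express $K = \Sigma^{-1}$ via the cofactor formula, $K_{i,j} = (-1)^{i+j}\det(\Sigma_{[p]\setminus\{j\},[p]\setminus\{i\}})/\det\Sigma$; since $\det\Sigma \neq 0$, $K_{i,j}=0$ iff the stated minor vanishes (using symmetry of $\Sigma$ to write the minor in the form given). The main obstacle, such as it is, is the careful bookkeeping in the conditional argument — making sure that ``$X_i \independent X_j \mid X_C$'' is correctly characterized as factorization of the conditional density for each value of the conditioning variable, and that the relevant $2\times 2$ block of $K$ really is the inverse of the conditional covariance; once the Schur complement identity from Proposition~\ref{prop_Gaussian} is in hand, everything else is a short linear-algebra computation.
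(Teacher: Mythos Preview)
Your proposal is correct and follows essentially the same route as the paper: both use Proposition~\ref{prop_Gaussian} to reduce to the bivariate (conditional) Gaussian, invoke the Schur complement identity $(\Sigma_{A\mid B})^{-1}=K_{A,A}$ to translate the vanishing of the off-diagonal conditional covariance into $K_{i,j}=0$, and then apply the cofactor formula for the second equivalence in~(b). The only cosmetic difference is that for part~(a) the paper appeals to the conditional mean formula in Proposition~\ref{prop_Gaussian}(b) rather than to the marginal density, but this is not a substantive divergence.
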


\begin{proof}
Statement (a) follows directly from the expression for the conditional mean in Proposition~\ref{prop_Gaussian} (b). From the expression for the conditional covariance in Proposition~\ref{prop_Gaussian} (b) it follows that $\Sigma_{\{i,j\}\mid ([p]\setminus\{i,j\})} = (K_{\{i,j\}, \{i,j\}})^{-1}$. To prove (b), note that if follows from (a) that $X_i\independent X_j\mid X_{[p]\setminus\{i,j\}}$ if and only if the $2\times 2$ conditional covariance matrix $\Sigma_{\{i,j\}\mid ([p]\setminus\{i,j\})}$ is diagonal. This is the case if and only if $K_{\{i,j\}, \{i,j\}}$ is diagonal, or equivalently, $K_{i,j}=0$. This proves the first equivalence in~(b). The second equivalence is a consequence of the cofactor formula for matrix inversion, since 
$$K_{i,j}=(\Sigma^{-1})_{i,j} = (-1)^{i+j}\frac{\det(\Sigma_{[p]\setminus\{i\}, [p]\setminus\{j\}})}{\det(\Sigma)},$$
which completes the proof.
\end{proof}

Corollary~\ref{cor_Gaussian_CI} shows that for undirected Gaussian graphical models a missing edge $(i,j)$ in the underlying graph $G$ (i.e.~the concentration graph) corresponds to the conditional independence relation $X_i\independent X_j \mid X_{[p]\setminus\{i,j\}}$. %This explains why $G$ is also known as the \emph{concentration graph}. 
Corollary~\ref{cor_Gaussian_CI} can be generalized to an equivalence between any conditional independence relation and the vanishing of a particular almost principal minor of $\Sigma$ or $K$. This is shown in the following proposition.

\begin{proposition}
Let $X\in\mathbb{R}^p$ be distributed as $\,\mathcal{N}(\mu, \Sigma)$. Let $\,i,j\in [p]$ with $i\neq j$ and let $\,S\subseteq [p]\setminus\{i,j\}$. Then the following statements are equivalent:
\begin{enumerate}
\item[(a)] $X_{i} \independent X_j \mid X_S$;
\item[(b)] $\det(\Sigma_{iS, jS})=0$, where $iS=\{i\}\cup S$;
\item[(c)] $\det(K_{iR, jR})=0$, where $R = [p]\setminus(S\cup\{i,j\})$.
\end{enumerate}
\end{proposition}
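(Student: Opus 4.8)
The plan is to reduce each of the two equivalences to Corollary~\ref{cor_Gaussian_CI}, applied not to $X$ itself but to a smaller Gaussian vector built from $X$: for (a)$\,\Leftrightarrow\,$(b) I would pass to a \emph{marginal} of $X$, and for (a)$\,\Leftrightarrow\,$(c) to a \emph{conditional} of $X$. Write $jS=\{j\}\cup S$, $iR=\{i\}\cup R$, $jR=\{j\}\cup R$, $ijS=\{i,j\}\cup S$, and $ijR=\{i,j\}\cup R=[p]\setminus S$, and note that $\{i\},\{j\},S,R$ are pairwise disjoint with union $[p]$; in particular $iS$ and $jR$ partition $[p]$, and so do $jS$ and $iR$. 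All matrices to which Corollary~\ref{cor_Gaussian_CI} will be applied are positive definite, being principal submatrices of positive definite matrices (or covariance matrices of genuine Gaussians), so the corollary applies.

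For (a)$\,\Leftrightarrow\,$(b), I would first observe that $X_i\independent X_j\mid X_S$ depends only on the joint law of $X_{ijS}$, which by Proposition~\ref{prop_Gaussian}(a) is $\mathcal N(\mu_{ijS},\Sigma_{ijS,ijS})$. Applying Corollary~\ref{cor_Gaussian_CI}(b) to this vector, taking $ijS$ as the ambient index set so that the set $[p]\setminus\{i,j\}$ there becomes $S$, gives that $X_i\independent X_j\mid X_S$ holds if and only if $\det\big((\Sigma_{ijS,ijS})_{jS,\,iS}\big)=0$. Since $(\Sigma_{ijS,ijS})_{jS,iS}=\Sigma_{jS,iS}$ is the transpose of $\Sigma_{iS,jS}$, this is exactly statement (b).

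For (a)$\,\Leftrightarrow\,$(c), I would use Proposition~\ref{prop_Gaussian}(b) together with the Schur-complement identity $K_{A,A}=\Sigma_{A\mid B}^{-1}$ established in its proof: the conditional law of $X_{ijR}$ given $X_S=x_S$ is Gaussian with covariance matrix $(K_{ijR,ijR})^{-1}$. Moreover $X_i\independent X_j\mid X_S$ holds for $X$ if and only if $X_i\independent X_j$ holds in this conditional law (conditioning on $X_S$ and marginalising onto $(X_i,X_j)$ returns the law of $(X_i,X_j)\mid X_S$, and for Gaussians the conditional covariance does not depend on $x_S$). Corollary~\ref{cor_Gaussian_CI}(a) applied to $X_{ijR}\mid X_S$ then yields the equivalence with $\big((K_{ijR,ijR})^{-1}\big)_{i,j}=0$, and the cofactor formula for matrix inversion applied to the invertible matrix $K_{ijR,ijR}$ — exactly as in the proof of Corollary~\ref{cor_Gaussian_CI}(b) — rewrites the left-hand side as $\pm\det(K_{iR,jR})/\det(K_{ijR,ijR})$, whose vanishing is equivalent to statement (c).

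The routine but genuinely delicate part is the index bookkeeping: one must check that restricting $K_{ijR,ijR}$ (indexed by $\{i,j\}\cup R$) to the rows and columns left after deleting row $j$ and column $i$ is precisely $K_{iR,jR}$, and the analogous claim for $\Sigma_{ijS,ijS}$. The degenerate case $S=\emptyset$ needs no separate treatment — then (b) is Corollary~\ref{cor_Gaussian_CI}(a) and (c) its last clause. As an alternative to the conditioning argument, I could instead derive (b)$\,\Leftrightarrow\,$(c) directly from Jacobi's identity relating a minor of $K=\Sigma^{-1}$ to the complementary minor of $\Sigma$: since $iS\cup jR=jS\cup iR=[p]$ with both unions disjoint, this gives $\det(K_{iR,jR})=\pm\det(\Sigma_{iS,jS})/\det(\Sigma)$, and $\det(\Sigma)>0$ closes the loop.
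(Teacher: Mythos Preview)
Your proposal is correct and follows essentially the same route as the paper: marginalize onto $X_{ijS}$ and invoke Corollary~\ref{cor_Gaussian_CI}(b) for (a)$\Leftrightarrow$(b), then use the Schur-complement identity $\Sigma_{ijR\mid S}=(K_{ijR,ijR})^{-1}$ together with Corollary~\ref{cor_Gaussian_CI}(a) and the cofactor formula for (a)$\Leftrightarrow$(c). Your write-up is more explicit about the index bookkeeping and adds the alternative Jacobi-identity shortcut for (b)$\Leftrightarrow$(c), but the substance is the same.
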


\begin{proof}
By Proposition~\ref{prop_Gaussian} (a), the marginal distribution of $\,X_{S\cup\{i,j\}}$ is Gaussian with covariance matrix $\,\Sigma_{ijS, ijS}$. Then Corollary~\ref{cor_Gaussian_CI} (b) implies the equivalence between (a) and (b). Next we show the equivalence between (a) and (c): It follows from Proposition~\ref{prop_Gaussian} (b) that the inverse of $\,K_{ijR, ijR}$ is equal to the conditional covariance $\,\Sigma_{ijR\mid S}$. Hence by Corollary~\ref{cor_Gaussian_CI}~(a), the conditional independence statement in (a) is equivalent to $((K_{ijR, ijR})^{-1})_{ij}=0$, which by the cofactor formula for matrix inversion is equivalent to~(c). %This completes the proof. \hfill\qed%So the equivalence of (a) and (c) follows from Corollary~\ref{cor_Gaussian_CI} (i).
\end{proof}

\section{Gaussian likelihood and convex optimization}
\label{sec:likelihood}

Given $n$ i.i.d.~observations $X^{(1)},\dots ,X^{(n)}$ from $\,\mathcal{N}(\mu,\Sigma)$, we define the \emph{sample covariance matrix} as
\vspace{-0.2cm}
$$S\;=\;\frac{1}{n}\sum_{i=1}^n (X^{(i)}-\bar X) (X^{(i)}-\bar X)^T,\vspace{-0.2cm}$$ where $\bar X=\frac{1}{n}\sum_{i=1}^{n}X^{(i)}$ is the \emph{sample mean}. We will see that $\bar X$ and $S$ are sufficient statistics for the Gaussian model and hence we can write the log-likelihood function in terms of these quantities. Ignoring the normalizing constant, the Gaussian log-likelihood expressed as a function of $(\mu, \Sigma)$ is
\begin{align*}
\ell(\mu, \Sigma) &\propto -\frac{n}{2}\log\det (\Sigma) -\frac{1}{2}\sum_{i=1}^n (X^{(i)}-\mu)^T\Sigma^{-1}(X^{(i)}-\mu) \\
&= -\frac{n}{2}\log\det (\Sigma) - \frac{1}{2}\tr\bigg( \Sigma^{-1}\Big(\sum_{i=1}^n (X^{(i)}-\mu)(X^{(i)}-\mu)^T\Big)\bigg) \\
&= -\frac{n}{2}\log\det (\Sigma) - \frac{n}{2}\tr(S\Sigma^{-1}) -\frac{n}{2} (\bar X-\mu)^T \Sigma^{-1} (\bar X-\mu),
\end{align*}
where for the last equality we expanded $X^{(i)}-\mu = (X^{(i)}-\bar X) + (\bar X-\mu)$ and used the fact that $\sum_{i=1}^n (X^{(i)}-\bar X)=0$. Hence, it can easily be seen that in the \emph{saturated} (unconstrained) \emph{model} where $(\mu,\Sigma)\in \R^p\times\s^p_{\succ 0}$, the MLE is given by
$$\hat{\mu} = \bar X \quad \textrm{and} \quad \hat{\Sigma} = S,$$
assuming that $S\in\s^p_{\succ 0}$.

ML estimation under general constraints on the parameters $(\mu, \Sigma)$ can be complicated. Since Gaussian graphical models only pose constraints on the covariance matrix, we will restrict ourselves to models where the mean $\mu$ is unconstrained, i.e.~$(\mu, \Sigma)\in\R^p\times\Theta$, where $\Theta\subseteq\s^p_{\succ 0}$. In this case, $\hat\mu=\bar X$ and the ML estimation problem for $\Sigma$ boils down to the optimization problem
\begin{equation}
\label{opt_Sigma}
\begin{aligned}
& \underset{\Sigma}{\text{maximize}}
& &-\log\det (\Sigma) - \tr(S\Sigma^{-1}) \\
& \text{subject to}
& & \;\Sigma \in \Theta.
\end{aligned}
\end{equation}
While this objective function as a function of the covariance matrix $\Sigma$ is in general not concave over the whole cone $\s^p_{\succ 0}$, it is easy to show that it is concave over a large region of the cone, namely for all $\Sigma\in\s^p_{\succ 0}$ such that $\Sigma-2S \in\s^p_{\succ 0}$ (see~\cite[Excercise 7.4]{boyd_Vandenberghe}).

Gaussian graphical models are given by linear constraints on $K$. So it is convenient to write the optimization problem (\ref{opt_Sigma}) in terms of the concentration matrix $K$:
\begin{equation}
\label{opt_K}
\begin{aligned}
& \underset{K}{\text{maximize}}
& &\log\det (K) - \tr(S K) \\
& \text{subject to}
& & \;K \in \mathcal{K},
\end{aligned}
\end{equation}
where $\mathcal{K} = \Theta^{-1}$. In particular, for a Gaussian graphical model with graph $G=(V,E)$ the constraints are given by $K\in\mathcal{K}_G$, where
$$\mathcal{K}_G := \{K\in\s^p_{\succ 0} \mid K_{i,j}=0 \textrm{ for all } i\neq j \textrm{ with } (i,j)\notin E\}.$$
In the following, we show that the objective function in (\ref{opt_K}), i.e.~as a function of $K$, is concave over its full domain $\s^p_{\succ 0}$. Since $\mathcal{K}_G$ is a convex cone, this implies that ML estimation for Gaussian graphical models is a convex optimization problem.

\begin{proposition}
\label{prop_convex}
The function $f(Y) = \log\det (Y) - \tr(SY)$ is concave on its domain $\s^p_{\succ 0}$. 
\end{proposition}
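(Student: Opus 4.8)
The plan is to split $f(Y) = \log\det(Y) - \tr(SY)$ into its two summands and argue concavity of each. The term $-\tr(SY)$ is linear in $Y$ (indeed $\tr(SY) = \langle S, Y\rangle$ in the trace inner product on $\s^p$), hence both concave and convex, so it suffices to show that $Y \mapsto \log\det(Y)$ is concave on the open cone $\s^p_{\succ 0}$; the sum of a concave and a linear function is concave.

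To prove concavity of $\log\det$, I would use the standard reduction to the one-dimensional case: a function $g$ on an open convex set is concave if and only if its restriction to every line segment in the domain is concave. So fix $Y \in \s^p_{\succ 0}$ and a direction $H \in \s^p$, and consider $\varphi(t) = \log\det(Y + tH)$ for $t$ in an open interval around $0$ small enough that $Y + tH \succ 0$ (such an interval exists since $\s^p_{\succ 0}$ is open). Writing $Y = Y^{1/2} Y^{1/2}$ with $Y^{1/2} \succ 0$, factor $Y + tH = Y^{1/2}\bigl(I + t\,Y^{-1/2} H Y^{-1/2}\bigr)Y^{1/2}$, so that $\varphi(t) = \log\det(Y) + \log\det\bigl(I + t\widetilde H\bigr)$ where $\widetilde H := Y^{-1/2} H Y^{-1/2}$ is symmetric. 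Diagonalizing $\widetilde H$ with real eigenvalues $\lambda_1, \dots, \lambda_p$ gives
$$
\varphi(t) = \log\det(Y) + \sum_{k=1}^p \log(1 + t\lambda_k).
$$
Each summand $t \mapsto \log(1 + t\lambda_k)$ has second derivative $-\lambda_k^2/(1+t\lambda_k)^2 \le 0$ wherever defined, so $\varphi'' (t) = -\sum_k \lambda_k^2/(1+t\lambda_k)^2 \le 0$, i.e.\ $\varphi$ is concave. Since $Y$ and $H$ were arbitrary, $\log\det$ is concave on $\s^p_{\succ 0}$, and therefore so is $f$.

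The only real subtlety — hardly an obstacle — is making sure the domain considerations are handled cleanly: one must note that $\s^p_{\succ 0}$ is convex and open, so that for any two points the connecting segment stays in the cone and the parametrized function $\varphi$ is defined on an open interval containing $[0,1]$ (after rescaling $H$), which legitimizes the second-derivative test. An alternative route, if one prefers to avoid the square-root factorization, is to compute the Hessian of $\log\det$ directly: the first differential is $\langle Y^{-1}, H\rangle$ and the second differential is $-\tr(Y^{-1} H Y^{-1} H) = -\|Y^{-1/2} H Y^{-1/2}\|_F^2 \le 0$, giving negative semidefiniteness of the Hessian at every point, which again yields concavity. I would present the line-restriction argument as the main proof since it is the most self-contained.
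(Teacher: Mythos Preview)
Your proof is correct and follows essentially the same approach as the paper: reduce to the linearity of the trace term, then show concavity of $\log\det$ by restricting to an arbitrary line, factoring $Y+tH = Y^{1/2}(I + tY^{-1/2}HY^{-1/2})Y^{1/2}$, and using concavity of $t\mapsto \log(1+t\lambda_k)$. The only differences are cosmetic---you explicitly compute $\varphi''$ and mention the Hessian alternative, whereas the paper simply asserts concavity of $\log(1+t\lambda_i)$---but the argument is the same.
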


\begin{proof}
Since $\tr(SY)$ is linear in $Y$ it suffices to prove that the function $\,\log\det(Y)$ is concave over $\s^p_{\succ 0}$. We prove this by showing that the function is concave on any line in $\s^p_{\succ 0}$. Let $Y\in\s^p_{\succ 0}$ and consider the line $Y+tV$, $V\in\s^p$, that passes through $Y$. It suffices to prove that $g(t) = \log\det (Y+tV)$ is concave for all $t\in\R$ such that $Y+tV\in\s^p_{\succ 0}$. This can be seen from the following calculation:
\begin{align*}
g(t) &= \log\det (Y+tV)\\
&= \log\det(Y^{1/2}(I+tY^{-1/2}VY^{-1/2})Y^{1/2})\\
&=\log \det(Y) + \sum_{i=1}^p\log(1+t\lambda_i),
\end{align*}
where $I$ denotes the identity matrix and $\lambda_i$ are the eigenvalues of \,$Y^{-1/2}VY^{-1/2}$. This completes the proof, since $\log\det (Y)$ is a constant and $\log(1+t\lambda_i)$ is concave in $t$.
\end{proof}

As a consequence of Proposition~\ref{prop_convex}, we can study the dual of (\ref{opt_K}) with $\mathcal{K}=\mathcal{K}_G$. See e.g.~\cite{boyd_Vandenberghe} for an introduction to convex optimization and duality theory. The Lagrangian of this convex optimization problem is given by:
%This shows that ML estimation for Gaussian graphical models, i.e.~the optimization problem (\ref{opt_K}) with $\mathcal{K}=\mathcal{K}_G$, is a convex optimization problem; see e.g.~\cite{boyd_Vandenberghe} for an introduction to convex optimization. It is instructive to study its dual. The Lagrangian is:
\begin{align*}
\mathcal{L}(K,\nu) &= \log\det (K) - \tr(S K) -2\sum_{(i,j)\notin E, i\neq j} \nu_{i,j}K_{i,j}\\
&= \log\det (K) - \sum_{i=1}^p S_{i,i} K_{i,i} - 2\sum_{(i,j)\in E}S_{i,j} K_{i,j}- 2\sum_{(i,j)\notin E, \, i\neq j} \nu_{i,j}K_{i,j},
\end{align*}
where $\nu=(\nu_{i,j})_{(i,j)\notin E}$ are the Lagrangian multipliers. To simplify the calculations, we omit the constraint $K\in\s^p_{\succ 0}$. This can be done, since it is assumed that $K$ is in the domain of $\mathcal{L}$. Maximizing $\mathcal{L}(K,\nu)$ with respect to $K$ gives
$$(\hat{K}^{-1})_{i,j} = \left\{ \begin{array}{ll}
S_{i,j} & \textrm{if $\;i=j$ or $(i,j)\in E$}\\
\nu_{i,j} & \textrm{otherwise}.
  \end{array} \right.
$$
The Lagrange dual function is obtained by plugging in $\hat{K}$ for $K$ in $\mathcal{L}(K,\nu)$, which results in
$$g(\nu) = \log\det (\hat{K}) - \tr (\hat{K}^{-1}\hat{K}) = \log\det (\hat{K}) - p.$$
Hence, the dual optimization problem to ML estimation in Gaussian graphical models is given by
\begin{equation}
\label{opt_dual}
\begin{aligned}
& \underset{\Sigma\in\s^p_{\succ 0}}{\text{minimize}}
& &-\log\det \Sigma - p \\
& \text{subject to}
& & \Sigma_{i,j} = S_{i,j} \;\textrm{ for all } \;i=j \textrm{ or } (i,j)\in E.
\end{aligned}
\end{equation}
Note that this optimization problem corresponds to \emph{entropy maximization} for fixed sufficient statistics. In fact, this dual relationship between likelihood maximization and entropy maximization holds more generally for exponential families; see~\cite{Wainwright_Jordan}.

Sections~\ref{sec:geometry} and \ref{sec:MLE_existence} are centered around the existence of the MLE. We say that the MLE does not exist if the likelihood does not attain the global maximum. Note that the identity matrix is a strictly feasible point for (\ref{opt_K}) with $\;\mathcal{K}=\mathcal{K}_G$. Hence, the MLE does not exist if and only if the likelihood is unbounded. \emph{Slater's constraint qualification} states that the existence of a strictly primal feasible point is sufficient for strong duality to hold for a convex optimization problem (see e.g.~\cite{boyd_Vandenberghe} for an introduction to convex optimization). Since the identity matrix is a strictly feasible point for (\ref{opt_K}), strong duality holds for the optimization problems (\ref{opt_K}) with $\mathcal{K}=\mathcal{K}_G$ and (\ref{opt_dual}), and thus we can equivalently study the dual problem (\ref{opt_dual}) to obtain insight into ML estimation for Gaussian graphical models. In particular, the MLE does not exist if and only if there exists no feasible point for the dual optimization problem~(\ref{opt_dual}). In the next section, we give an algebraic description of this property. A generalization of this characterization for the existence of the MLE holds also more generally for regular exponential families; see~\cite{Barndorff_1978, Brown_exponential}.

\section{The MLE as a positive definite completion problem}
\label{sec:pd_completion}

To simplify notation, we use $E^*= E\cup\{(i,i)\mid i\in V\}$. We introduce the projection on the augmented edge set $E^*$, namely
$$\pi_{G}:\s^p_{\succeq 0} \to \R^{|E^*|}, \quad \pi_G(S) = \{S_{i,j} \mid (i,j)\in E^*\}.$$
Note that $\pi_G(S)$ can be seen as a partial matrix, where the entries corresponding to missing edges in the graph $G$ have been removed (or replaced by question marks as shown in~(\ref{ex_partial_matrix}) for the case where $G$ is the 4-cycle). In the following, we use $S_G$ to denote the partial matrix corresponding to $\pi_G(S)$. Using this notation, the constraints in the  optimization problem (\ref{opt_dual}) become $\,\Sigma_G = S_G$. Hence, existence of the MLE in a Gaussian graphical model is a \emph{positive definite matrix completion problem}: The MLE exists if and only if the partial matrix $S_G$ can be completed to a positive definite matrix. In that case, the MLE $\hat \Sigma$ is the unique positive definite completion that maximizes the determinant. And as a consequence of strong duality, we obtain that $(\hat \Sigma^{-1})_{i,j}=0$ for all $(i,j)\notin E^*$.

Positive definite completion problems have been widely studied in the linear algebra literature~\cite{Barrett2, Barrett1, Grone_1984, Laurent_1997}. Clearly, if a partial matrix has a positive definite completion, then every specified (i.e., with given entries) principal submatrix is positive definite. Hence, having a positive definite completion imposes some obvious necessary conditions. However, these conditions are in general not sufficient as seen in the following example, where the graph $G$ is the 4-cycle:
\begin{equation}
\label{ex_partial_matrix}
S_G=\begin{pmatrix} 1 & 0.9 & ? & -0.9 \\ 0.9 & 1 & 0.9 & ? \\ ? & 0.9 & 1 & 0.9 \\ -0.9 & ? & 0.9 & 1\end{pmatrix}.
\end{equation}
It can easily be checked that this partial matrix does not have a positive definite completion, although all the specified $2\times 2$-minors are positive. Hence, the MLE does not exist for the sufficient statistics given by $S_G$. 

This example leads to the question if there are graphs for which the obvious necessary conditions %, namely positive definiteness of the submatrices corresponding to each \emph{clique} (i.e., completely connected subgraph) in the graph, 
are also sufficient for the existence of a positive definite matrix completion. The following remarkable theorem proven in \cite{Grone_1984} answers this question.
\begin{theorem}
\label{thm_chordal}
For a graph $G$ the following statements are equivalent:
\begin{enumerate}
\item [(a)] A $G$-partial matrix $M_G\in\mathbb{R}^{|E^*|}$ has a positive definite completion if and only if all completely specified submatrices  in $M_G$ are positive definite.
\item [(b)] $G$ is \emph{chordal} (also known as \emph{triangulated}), i.e.~every cycle of length 4 or larger has a chord. 
\end{enumerate}\end{theorem}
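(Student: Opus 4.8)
**The plan is to prove both directions of the equivalence in Theorem~\ref{thm_chordal}, treating the ``easy'' direction (non-chordal $\Rightarrow$ obvious conditions insufficient) and the ``hard'' direction (chordal $\Rightarrow$ obvious conditions sufficient) separately.**

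For the direction (a)$\Rightarrow$(b), I would argue by contrapositive: suppose $G$ is not chordal, so it contains a chordless cycle $C$ of length $\ell\ge 4$. I would then generalize the $4$-cycle counterexample in~(\ref{ex_partial_matrix}) to $C$: place $1$'s on the diagonal, a common value $\rho$ on each edge of the cycle, and choose the sign pattern / magnitude of $\rho$ so that every specified $2\times 2$ principal minor is positive but no positive definite completion exists. Concretely, for a chordless $\ell$-cycle one can take correlations close to $1$ around the cycle but with one ``frustrated'' edge (a negative entry, as in the $4$-cycle example), and check that the partial matrix restricted to the induced subgraph on $C$ has no PD completion — then pad the remaining coordinates by the identity so the whole partial matrix inherits this obstruction. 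Since $C$ is an induced subgraph, any completion of $M_G$ would restrict to a completion of $M_C$, giving a contradiction. The main point here is simply to exhibit the obstruction on a chordless cycle and note it is inherited by induced subgraphs.

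For the direction (b)$\Rightarrow$(a), I would use the characterization of chordal graphs by a \emph{perfect elimination ordering}: there is an ordering $v_1,\dots,v_p$ of the vertices such that, for each $k$, the set $\{v_j : j>k,\ (v_k,v_j)\in E\}$ of ``later neighbors'' of $v_k$ induces a clique. The strategy is induction on $p$ (or on the number of missing edges), at each step ``filling in'' one entry of the partial matrix. Take $v=v_1$ to be a simplicial vertex, so its neighborhood $N(v)$ is a clique. For a vertex $u\notin N(v)\cup\{v\}$ the entry $M_{v,u}$ is unspecified; the key computation is to show it can be chosen so that the resulting enlarged partial matrix still has all completely specified submatrices positive definite. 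This is where the Schur complement enters (matching the theme of Proposition~\ref{prop_Gaussian}): because $N(v)$ is a clique, the submatrix on $\{v\}\cup N(v)$ is fully specified and positive definite, so one sets $M_{v,u}$ to the value dictated by requiring $X_v \independent X_u \mid X_{N(v)}$ in the associated Gaussian interpretation — equivalently, choosing the entry to make the relevant almost-principal minor vanish — and then verifies via a Schur-complement / quotient argument that positivity of all specified principal submatrices is preserved. Iterating over all non-neighbors of $v$ eliminates $v$ entirely, and one recurses on $G\setminus v$, which is again chordal.

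The hard part will be the inductive step: showing that when we fill in an unspecified entry $M_{v,u}$ using the clique neighborhood, \emph{every} completely specified principal submatrix of the new partial matrix is positive definite — not just the one on $\{v,u\}\cup N(v)$. This requires a careful argument that the ``missing'' data outside a small window does not interfere, typically phrased as: a partial matrix whose graph is chordal has a PD completion iff each maximal-clique submatrix is PD and the clique submatrices are ``consistent'' on overlaps (a running-intersection/junction-tree argument), with the completion built clique by clique. An alternative, slicker route avoids the explicit induction: invoke the max-entropy principle — the unique PD completion maximizing $\det$ has inverse supported on $E^*$, and for chordal $G$ this inverse can be written in closed form as an alternating sum over maximal cliques and separators of the clique-marginal inverses; one then checks directly that this explicit matrix is positive definite whenever all clique submatrices are, which is immediate from the junction-tree factorization. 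I would present the perfect-elimination-ordering induction as the main proof and remark on the explicit chordal formula, since that is the form most useful in later sections on the MLE.
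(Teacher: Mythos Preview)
The paper does not actually prove Theorem~\ref{thm_chordal}; it cites~\cite{Grone_1984} and records only a one-sentence sketch of (b)$\Rightarrow$(a): add one edge at a time so that the graph remains chordal at every step, and fill the new entry so as to maximize the determinant of the largest fully specified principal submatrix containing it. Your contrapositive argument for (a)$\Rightarrow$(b) via a ``frustrated'' chordless cycle, padded by the identity on the remaining vertices, is correct and is the standard route (note that since $C$ is an \emph{induced} cycle, any clique of $G$ can contain at most one edge of $C$, so the padded partial matrix indeed has all specified principal submatrices PD).

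For (b)$\Rightarrow$(a) your simplicial-vertex induction is a legitimate alternative to the edge-addition scheme, but as written there is a concrete gap. You propose to set the unspecified entry via $X_v\independent X_u\mid X_{N(v)}$, i.e.\ $M_{v,u}=M_{v,N(v)}\,M_{N(v),N(v)}^{-1}\,M_{N(v),u}$. This formula requires the block $M_{N(v),u}$, which is specified only when $u$ is adjacent to \emph{every} vertex of $N(v)$; for a generic non-neighbor of a simplicial vertex this fails (e.g.\ on the path $1\text{--}2\text{--}3\text{--}4$ take $v=1$, $N(v)=\{2\}$, $u=4$: the entry $M_{2,4}$ is not available). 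Hence the step ``fill in the $v$-row, then recurse on $G\setminus v$'' cannot be executed in that order. There are two clean fixes. One is to reverse the induction: first complete $M_{G\setminus v}$ to some $\Sigma'\in\s^{p-1}_{\succ 0}$ (all cliques of $G\setminus v$ are cliques of $G$, so the hypothesis is inherited), and then choose the free entries $\Sigma_{v,u}$, $u\notin N(v)$, to minimize $\Sigma_{v,V\setminus v}(\Sigma')^{-1}\Sigma_{V\setminus v,v}$; the minimum equals $M_{v,N(v)}M_{N(v),N(v)}^{-1}M_{N(v),v}$, so positivity of the extended matrix reduces exactly to positive definiteness of the clique block on $\{v\}\cup N(v)$. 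The other fix is the paper's/Grone et~al.'s scheme: choose the single added edge $(u,v)$ so that $G+(u,v)$ is still chordal; then the only new maximal clique is $\{u,v\}\cup S$ with $S=N(u)\cap N(v)$, and now all of $M_{S,u}$, $M_{S,v}$, $M_{S,S}$ \emph{are} specified, so the determinant-maximizing (equivalently conditional-independence) value of $M_{u,v}$ is computable and the positivity check is a single Schur complement on that clique.
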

The proof in~\cite{Grone_1984} is constructive. It makes use of the fact that any chordal graph can be turned into a complete graph by adding one edge at a time in such a way, that the resulting graph remains chordal at each step. Following this ordering of edge additions, the partial matrix is completed entry by entry in such a way as to maximize the determinant of the largest complete submatrix that contains the missing entry. Hence the proof in~\cite{Grone_1984} can be turned into an algorithm for finding a positive definite completion for partial matrices on chordal graphs.

We will see in Section~\ref{sec:MLE_existence} how to make use of positive definite completion results to determine the minimal number of observations required for existence of the MLE in a Gaussian graphical model.

\section{ML estimation and convex geometry}
\label{sec:geometry}

%The dual relationship described in the previous section leads to an algebraic and geometric picture of maximum likelihood estimation, which we describe in this section. 

After having introduced the connections to positive definite matrix completion problems, we now discuss how convex geometry enters the picture for ML estimation in Gaussian graphical models. We already introduced the set
$$\mathcal{K}_G : = \{ K \in \mathbb{S}^p_{\succ 0} \mid K_{i,j} = 0 \;\textrm{ for all } (i, j)\notin E^* \}.$$
Note that $\mathcal{K}_G$ is a convex cone obtained by intersecting the convex cone $\s^p_{\succ 0}$ with a linear subspace. We call $\mathcal{K}_G$ the \emph{cone of concentration matrices}. 

A second convex cone that plays an important role for ML estimation in Gaussian graphical models is the \emph{cone of sufficient statistics} denoted by $\mathcal{S}_G$. It is defined as the projection of the positive semidefinite cone onto the entries $E^*$, i.e.,
$$\mathcal{S}_G:=\pi_G(\s^p_{\succeq 0}).$$
In the following proposition, we show how these two cones are related to each other.

\begin{proposition} \label{prop:duality}
Let $G$ be an undirected graph. Then the cone of sufficient statistics $\mathcal{S}_G$ is the dual
cone to the cone of concentration matrices $\mathcal{K}_G$, i.e.
\begin{equation}
\label{duality_without_basis}
 \mathcal{S}_G \,=\,\bigl\{ \,S_G\in \R^{|E^*|}  \,\mid\, \langle S_G, K \rangle \geq 0
\,\,\,\hbox{for all} \,\,\,K \in \mathcal{K}_G \bigr\}.
\end{equation}
\end{proposition}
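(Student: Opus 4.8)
The plan is to identify $\R^{|E^*|}$ with the linear subspace $L:=\{K\in\s^p \mid K_{i,j}=0 \text{ for all }(i,j)\notin E^*\}$ by completing a partial matrix with zeros off $E^*$. Under this identification $\pi_G:\s^p\to L$ is the orthogonal projection with respect to the trace inner product, $\mathcal{K}_G=\s^p_{\succ 0}\cap L$, and $\langle S_G,K\rangle=\tr(S_L K)$ for $K\in L$, where $S_L\in L$ represents $S_G$; here one uses crucially that $K$ vanishes off $E^*$, so $\tr(SK)$ depends only on $\pi_G(S)$ and the pairing $\langle S_G,K\rangle$ is well defined. Thus the right-hand side of \eqref{duality_without_basis} is precisely the dual cone of $\mathcal{K}_G$ computed inside $L$, and the proposition asserts $\pi_G(\s^p_{\succeq 0})=\mathcal{K}_G^{\,*}$. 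The inclusion ``$\subseteq$'' is immediate: if $S_G=\pi_G(\Sigma)$ with $\Sigma\succeq 0$, then for $K\in\mathcal{K}_G\subseteq\s^p_{\succ 0}$ we get $\langle S_G,K\rangle=\tr(\Sigma K)=\tr(\Sigma^{1/2}K\Sigma^{1/2})\geq 0$, since $\Sigma^{1/2}K\Sigma^{1/2}\succeq 0$.

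For the reverse inclusion ``$\supseteq$'' I would argue by separation, which first requires knowing that $\mathcal{S}_G=\pi_G(\s^p_{\succeq 0})$ is a \emph{closed} convex cone. Convexity and the cone property are clear since $\pi_G$ is linear. Closedness is the one genuinely nontrivial ingredient, and it is here that it matters that $E^*$ contains all the diagonal positions: if $\pi_G(\Sigma^{(n)})\to T$ with $\Sigma^{(n)}\succeq 0$, then the diagonal entries of $\Sigma^{(n)}$, being among the entries of $\pi_G(\Sigma^{(n)})$, stay bounded, and the estimate $(\Sigma^{(n)}_{i,j})^2\leq\Sigma^{(n)}_{i,i}\Sigma^{(n)}_{j,j}$ valid for positive semidefinite matrices then bounds the whole sequence; a convergent subsequence has a limit $\Sigma^{\star}\in\s^p_{\succeq 0}$ with $\pi_G(\Sigma^{\star})=T$, whence $T\in\mathcal{S}_G$.

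With closedness in hand, I would take $T$ in the right-hand side of \eqref{duality_without_basis} and suppose, for contradiction, that $T\notin\mathcal{S}_G$. Since $\mathcal{S}_G$ is a closed convex cone not containing $T$, a standard separation argument yields $V\in L$ with $\langle V,T\rangle<0\leq\langle V,\pi_G(\Sigma)\rangle$ for every $\Sigma\in\s^p_{\succeq 0}$. But $\langle V,\pi_G(\Sigma)\rangle=\tr(V\Sigma)$ because $V\in L$, so $\tr(V\Sigma)\geq 0$ for all $\Sigma\succeq 0$, i.e.\ $V\in\s^p_{\succeq 0}$ by self-duality of the positive semidefinite cone; together with $V\in L$ this gives $V\in\s^p_{\succeq 0}\cap L=\overline{\mathcal{K}_G}$ (any element of $\s^p_{\succeq 0}\cap L$ is the limit of $M+\varepsilon I\in\mathcal{K}_G$, using $I\in L$). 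On the other hand $T$ pairs nonnegatively with every element of $\mathcal{K}_G$, hence with every element of its closure, so $\langle V,T\rangle\geq 0$ — contradicting $\langle V,T\rangle<0$. Hence $T\in\mathcal{S}_G$, and the two inclusions together give \eqref{duality_without_basis}.

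The only real obstacle is the closedness of $\mathcal{S}_G$ established in the second step: images of the positive semidefinite cone under linear maps are not closed in general, and it is exactly the presence of the full diagonal in $E^*$ that makes $\pi_G$ proper on $\s^p_{\succeq 0}$. Everything else is the standard separation and self-duality bookkeeping for convex cones.
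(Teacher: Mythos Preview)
Your proof is correct, but it takes a different route from the paper's. The paper dispatches the result in one line by invoking the general duality formula for convex cones: writing $\mathcal{L}_G$ for the linear subspace $L$ you introduce, it simply computes
\[
\mathcal{K}_G^\vee \;=\; (\s^p_{\succ 0}\cap\mathcal{L}_G)^\vee \;=\; (\s^p_{\succeq 0}+\mathcal{L}_G^\perp)/\mathcal{L}_G^\perp \;=\; \mathcal{S}_G,
\]
citing a textbook on convex geometry for the middle equality. Your argument, by contrast, proves the two inclusions by hand: the easy direction via positivity of $\tr(\Sigma K)$, and the hard direction by first establishing that $\mathcal{S}_G$ is closed and then separating. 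What you gain is a fully self-contained argument that makes explicit the one genuinely nontrivial point---namely that $\pi_G(\s^p_{\succeq 0})$ is closed because $E^*$ contains the diagonal, so the fibers of $\pi_G$ over bounded sets are bounded. The paper's one-liner hides this inside the cited duality formula (where one would still need, somewhere, that $\s^p_{\succeq 0}+\mathcal{L}_G^\perp$ is closed, which amounts to the same observation). What the paper's approach buys is brevity and a clear signal that the result is an instance of a general phenomenon rather than something special to graphs.
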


\begin{proof}
Let $\mathcal{K}_G^\vee$ denote the dual of $\mathcal{K}_G$, i.e.~the right-hand side of (\ref{duality_without_basis}). Let 
$$\mathcal{L}_G:=\{K\in\s^p\mid K_{i,j}=0 \textrm{ for all } (i,j)\notin E^*\}$$
denote the linear subspace defined by the graph $G$. We denote by $\mathcal{L}_G^\perp$ the orthogonal complement of $\mathcal{L}_G$ in $\s^p$. 
Using the fact that the dual of the full-dimensional
cone $\s^p_{\succ 0}$ is $\s^p_{\succeq 0}$, i.e.~$(\s^p_{\succ 0})^\vee = \s^p_{\succeq 0}$, general duality theory for convex cones (see e.g.~\cite{Barvinok}) implies:
$$
\mathcal{K}_G^\vee \,\,\, = \,\,\,
( \s^p_{\succ 0} \,\cap \, \mathcal{L}_G )^\vee \,\,\, = \,\,\,
(\s^p_{\succeq 0}\,+\, \mathcal{L}_G^\perp)/\mathcal{L}_G^\perp \,\,\,\, = \,\,\,\,\mathcal{S}_G,$$
which completes the proof. 
\end{proof}

It is clear from this proof that the geometric picture we have started to draw holds more generally for any Gaussian model that is given by linear constraints on the concentration matrix. We will therefore use $\mathcal{L}$ to denote any linear subspace of $\s^p$ and we assume that $\mathcal{L}$ intersects the interior of $\s^p_{\succeq 0}$. Hence, $\mathcal{L}_G$ is a special case defined by zero constraints given by missing edges in the graph $G$. Then,
$$\mathcal{K}_{\mathcal{L}} = \mathcal{L}\cap\s^p_{\succ 0}, \quad \mathcal{S}_{\mathcal{L}} =\pi_{\mathcal{L}}(\s^p_{\succeq 0}) = \mathcal{K}_{\mathcal{L}}^\vee,$$
where $\pi_{\mathcal{L}}:\s^p\to \s^p/\mathcal{L}^\perp$. Note that given a basis \,$K_1,\dots , K_d$\, for $\mathcal{L}$, this map can be identified  with
\begin{equation*}
%\label{pi_with_basis}
\pi_\mathcal{L} \,: \,\,\s^p \rightarrow \R^d, \quad S \mapsto \bigl( \langle S,K_1 \rangle,
\ldots,\langle S, K_d \rangle \bigr).
\end{equation*}

A \emph{spectrahedron} is a convex set that is defined by linear matrix inequalities. Given a sample covariance matrix $S$, we define the  spectrahedron
$$ {\rm fiber}_\mathcal{L}(S) \quad = \quad
\bigl\{ \Sigma \in \s^p_{\succ 0} \,\mid \,
\langle \Sigma, K \rangle \,  = \,
\langle S, K \rangle \,\,
\hbox{for all} \,\, K \in \mathcal{L} \bigr\}. $$
For a Gaussian graphical model with underlying graph $G$ this spectrahedron consists of all positive definite completions of $S_G$, i.e.
$$ {\rm fiber}_{G}(S) \quad = \quad
\left\{ \Sigma \in \s^p_{\succ 0} \,\mid \,
 \Sigma_G \,  = \,
S_G \right\}. $$

The following theorem combines the point of view of convex optimization developed in Section~\ref{sec:likelihood}, the connection to positive definite matrix completion discussed in Section~\ref{sec:pd_completion}, and the link to convex geometry described in this section into a result about the existence of the MLE in Gaussian models with linear constraints on the concentration matrix, which includes Gaussian graphical models as a special case. This result is essentially also given in~\cite[Theorem 2]{Grone_1984}.

\begin{figure}[b!]
\centering
%\vspace{0.7cm}
\includegraphics[scale=0.53]{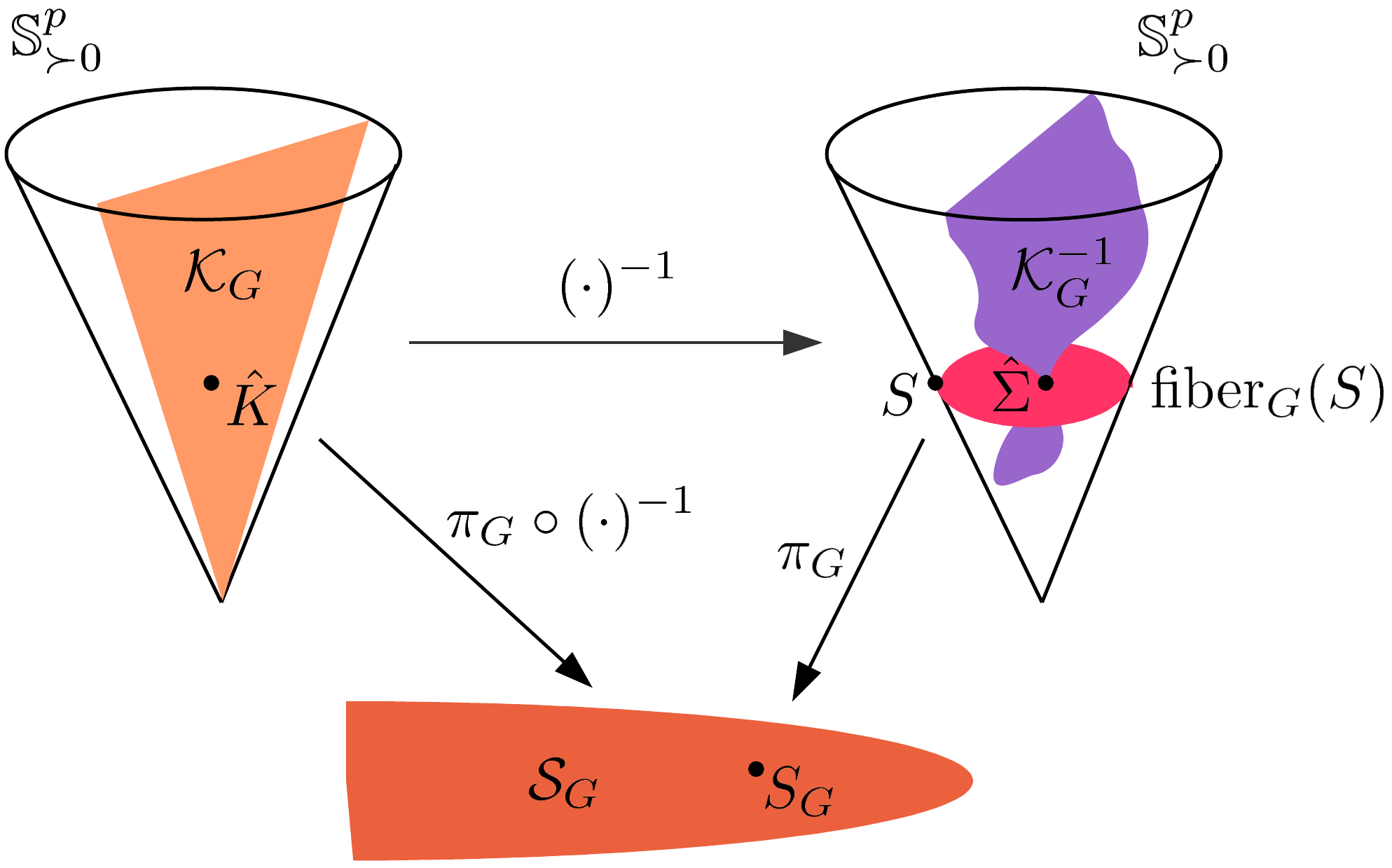}
%\vspace{0.5cm}
\caption{\emph{Geometry of maximum likelihood estimation in Gaussian graphical models. The cone $\mathcal{K}_G$ consists of all concentration matrices in the model and $\mathcal{K}_G^{-1}$ is the corresponding set of covariance matrices. The cone of sufficient statistics $\mathcal{S}_G$ is defined as the projection of $\s^p_{\succeq 0}$ onto the (augmented) edge set $E^*$ of $G$. It is dual and homeomorphic to $\mathcal{K}_G$. Given a sample covariance matrix $S$, ${\rm fiber}_G(S)$ consists of all positive definite completions of the $G$-partial matrix $S_G$, and it intersects $\mathcal{K}_G^{-1}$ in at most one point, namely the MLE $\hat{\Sigma}$.}}
\label{fig_cones}
\end{figure}

\begin{theorem} \label{thm_fiber}
Consider a Gaussian model with linear constraints on the concentration matrix defined by $\mathcal{L}$ with $\mathcal{L}\cap\s^p_{\succ 0} \neq\emptyset$. Then the MLEs $\hat{\Sigma}$ and $\hat{K}$ exist for a given sample covariance matrix $S$ if and only if
$\,{\rm fiber}_\mathcal{L}(S)$ is non-empty, in which case $\,{\rm fiber}_\mathcal{L}(S)$ intersects $\mathcal{K}_\mathcal{L}^{-1}$ in exactly one point, namely the MLE $\,\hat{\Sigma}$. Equivalently, $\hat{\Sigma}$ is the unique maximizer of the determinant over the spectrahedron $\,{\rm fiber}_\mathcal{L}(S)$.
\end{theorem}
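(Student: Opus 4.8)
The plan is to prove everything directly from the first-order optimality conditions, using Proposition~\ref{prop_convex} (and the strict concavity of $\log\det$ on $\s^p_{\succ 0}$) as the only convexity input; I will not need to invoke any black-box duality theorem. Throughout, write $f(K)=\log\det K-\langle S,K\rangle$ for the (profiled) log-likelihood in the concentration matrix, noting $\tr(SK)=\langle S,K\rangle$, and recall that the fiber is the relatively open affine slice ${\rm fiber}_\mathcal{L}(S)=(S+\mathcal{L}^\perp)\cap\s^p_{\succ 0}$.

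First I would dispose of necessity. If the MLE $\hat K$ exists, it maximizes $f$ over $\mathcal{K}_\mathcal{L}=\mathcal{L}\cap\s^p_{\succ 0}$; since $\s^p_{\succ 0}$ is open, $\hat K$ lies in the relative interior of this set, so the gradient $\nabla f(\hat K)=\hat K^{-1}-S$ must be orthogonal to $\mathcal{L}$. Setting $\hat\Sigma:=\hat K^{-1}$, this says precisely $\langle\hat\Sigma-S,K\rangle=0$ for all $K\in\mathcal{L}$, i.e.\ $\hat\Sigma\in{\rm fiber}_\mathcal{L}(S)$; in particular the fiber is non-empty and $\hat\Sigma\in{\rm fiber}_\mathcal{L}(S)\cap\mathcal{K}_\mathcal{L}^{-1}$.

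For the converse, assume ${\rm fiber}_\mathcal{L}(S)\ne\emptyset$. The step I expect to require the most care is showing that $\Sigma\mapsto\log\det\Sigma$ attains its maximum over the fiber, and this is exactly where the hypothesis $\mathcal{L}\cap\s^p_{\succ 0}\ne\emptyset$ is used. Fix $K_0\in\mathcal{L}\cap\s^p_{\succ 0}$; every $\Sigma$ in the closed set $C:=(S+\mathcal{L}^\perp)\cap\s^p_{\succeq 0}$ satisfies $\langle\Sigma,K_0\rangle=\langle S,K_0\rangle$, hence $\lambda_{\min}(K_0)\,\tr(\Sigma)\le\langle S,K_0\rangle$, so $\tr(\Sigma)$ — and therefore $\Sigma$ — stays bounded, and $C$ is compact. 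Since $\log\det$, extended by $-\infty$ on the singular part of $\s^p_{\succeq 0}$, is upper semicontinuous, it attains its maximum over $C$; as it takes a finite value somewhere on the non-empty subset ${\rm fiber}_\mathcal{L}(S)\subseteq C$, the maximizer $\hat\Sigma$ must be positive definite, so $\hat\Sigma\in{\rm fiber}_\mathcal{L}(S)$, and it is unique by strict concavity of $\log\det$. Everything after this is routine bookkeeping with gradients.

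It remains to identify $\hat\Sigma$ with the MLE and to prove the uniqueness of the intersection. Because $\hat\Sigma$ maximizes $\log\det$ over the relatively open affine slice ${\rm fiber}_\mathcal{L}(S)$, whose direction space is $\mathcal{L}^\perp$, its gradient $\hat\Sigma^{-1}$ is orthogonal to $\mathcal{L}^\perp$, hence $\hat K:=\hat\Sigma^{-1}\in\mathcal{L}\cap\s^p_{\succ 0}=\mathcal{K}_\mathcal{L}$; thus $\hat\Sigma\in{\rm fiber}_\mathcal{L}(S)\cap\mathcal{K}_\mathcal{L}^{-1}$. For any $K\in\mathcal{K}_\mathcal{L}\subseteq\mathcal{L}$ the fiber condition gives $\langle S,K\rangle=\langle\hat\Sigma,K\rangle$, so on $\mathcal{K}_\mathcal{L}$ we have $f(K)=\log\det K-\langle\hat K^{-1},K\rangle$, a function concave on all of $\s^p_{\succ 0}$ by Proposition~\ref{prop_convex} whose gradient $K^{-1}-\hat K^{-1}$ vanishes at $K=\hat K$; hence $\hat K$ maximizes $f$ globally on $\s^p_{\succ 0}$, a fortiori on $\mathcal{K}_\mathcal{L}$, so $\hat K$ is the MLE and $\hat\Sigma=\hat K^{-1}$ the covariance MLE, unique by strict concavity of $f$. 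Finally, repeating this last computation with an arbitrary $\Sigma^{*}\in{\rm fiber}_\mathcal{L}(S)\cap\mathcal{K}_\mathcal{L}^{-1}$ in place of $\hat\Sigma$ shows $(\Sigma^{*})^{-1}$ is the MLE $\hat K$, so $\Sigma^{*}=\hat\Sigma$: the intersection is a single point, which by construction is also the unique maximizer of $\det$ over the spectrahedron ${\rm fiber}_\mathcal{L}(S)$.
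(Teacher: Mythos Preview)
Your proof is correct and takes a genuinely different route from the paper's. The paper argues via convex duality: it writes down the primal problem (\ref{opt_K}) and its dual (minimizing $-\log\det\Sigma-p$ over ${\rm fiber}_\mathcal{L}(S)$), invokes Slater's condition (using the hypothesis $\mathcal{L}\cap\s^p_{\succ 0}\neq\emptyset$) to get strong duality, and then asserts that the primal optimum is attained if and only if the dual is feasible, identifying the optimizers via the KKT conditions. You instead bypass the duality machinery entirely and work directly with first-order conditions on both the primal and the ``dual'' side, gluing them together by the observation that on $\mathcal{K}_\mathcal{L}$ one can replace $S$ by $\hat\Sigma$ in the objective. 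The key substantive addition in your argument is the explicit compactness step: you show that the closure of the fiber in $\s^p_{\succeq 0}$ is bounded (via the trace inequality $\lambda_{\min}(K_0)\tr(\Sigma)\le\langle S,K_0\rangle$), hence compact, so the upper-semicontinuous extension of $\log\det$ attains its maximum. The paper's proof leaves this attainment implicit in the appeal to standard duality results. What the paper's approach buys is brevity and a transparent connection to the exponential-family duality picture emphasized elsewhere in Section~\ref{sec:geometry}; what your approach buys is a fully self-contained argument that makes precise exactly where the hypothesis $\mathcal{L}\cap\s^p_{\succ 0}\neq\emptyset$ enters (it is needed both for primal feasibility and for the boundedness of the fiber) and that does not require the reader to know when strong duality guarantees attainment.
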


\begin{proof}
This proof is a simple exercise in convex optimization; see~\cite{boyd_Vandenberghe} for an introduction. The ML estimation problem for Gaussian models with linear constraints on the concentration matrix is given by 
\begin{equation*}
\label{opt_KL}
\begin{aligned}
& \underset{K}{\text{maximize}}
& &\log\det K - \tr(S K) \\
& \text{subject to}
& & \;K \in \mathcal{K}_{\mathcal{L}}.
\end{aligned}
\end{equation*}
Its dual is 
\begin{equation*}
\label{opt_dual_KL}
\begin{aligned}
& \underset{\Sigma}{\text{minimize}}
& &-\log\det \Sigma - p \\
& \text{subject to}
& &\Sigma\in {\rm fiber}_{\mathcal{L}}(S).
\end{aligned}
\end{equation*}
Since by assumption the primal problem is strictly feasible, strong duality holds by Slater's constraint qualification with the solutions satisfying $\hat{\Sigma} = \hat{K}^{-1}$. The MLE exists, i.e.~the global optimum of the two optimization problems is attained, if and only if the dual is feasible, i.e.~${\rm fiber}_{\mathcal{L}}(S)$ is non-empty. Let $\Sigma\in{\rm fiber}_\mathcal{L}(S)\cap\mathcal{K}_\mathcal{L}^{-1}$. Then $(\Sigma^{-1}, \Sigma)$ satisfies the KKT conditions, namely stationarity, primal and dual feasibility and complimentary slackness. Hence, this pair is primal and dual optimal. Thus, if $\,{\rm fiber}_\mathcal{L}(S)$ is non-empty, then $\,{\rm fiber}_\mathcal{L}(S)$ intersects $\mathcal{K}_\mathcal{L}^{-1}$ in exactly one point, namely the MLE $\,\hat{\Sigma}$, which is the dual optimal solution. This completes the proof.
\end{proof}

The geometry of ML estimation in Gaussian models with linear constraints on the concentration matrix is summarized in Figure~\ref{fig_cones} for the special case of Gaussian graphical models. The geometric picture for general linear concentration models is completely analogous. The convex geometry of Gaussian graphical models on 3 nodes is shown in Figure~\ref{fig_3d}. Since a general covariance matrix on 3 nodes lives in 6-dimensional space, we show the picture for correlation matrices instead, which live in 3-dimensional space.

\begin{figure}[t!]
\centering
\subfigure[$\mathcal{L}_G\cap\mathbb{S}^3_{\succeq 0}$]{\includegraphics[scale=0.29]{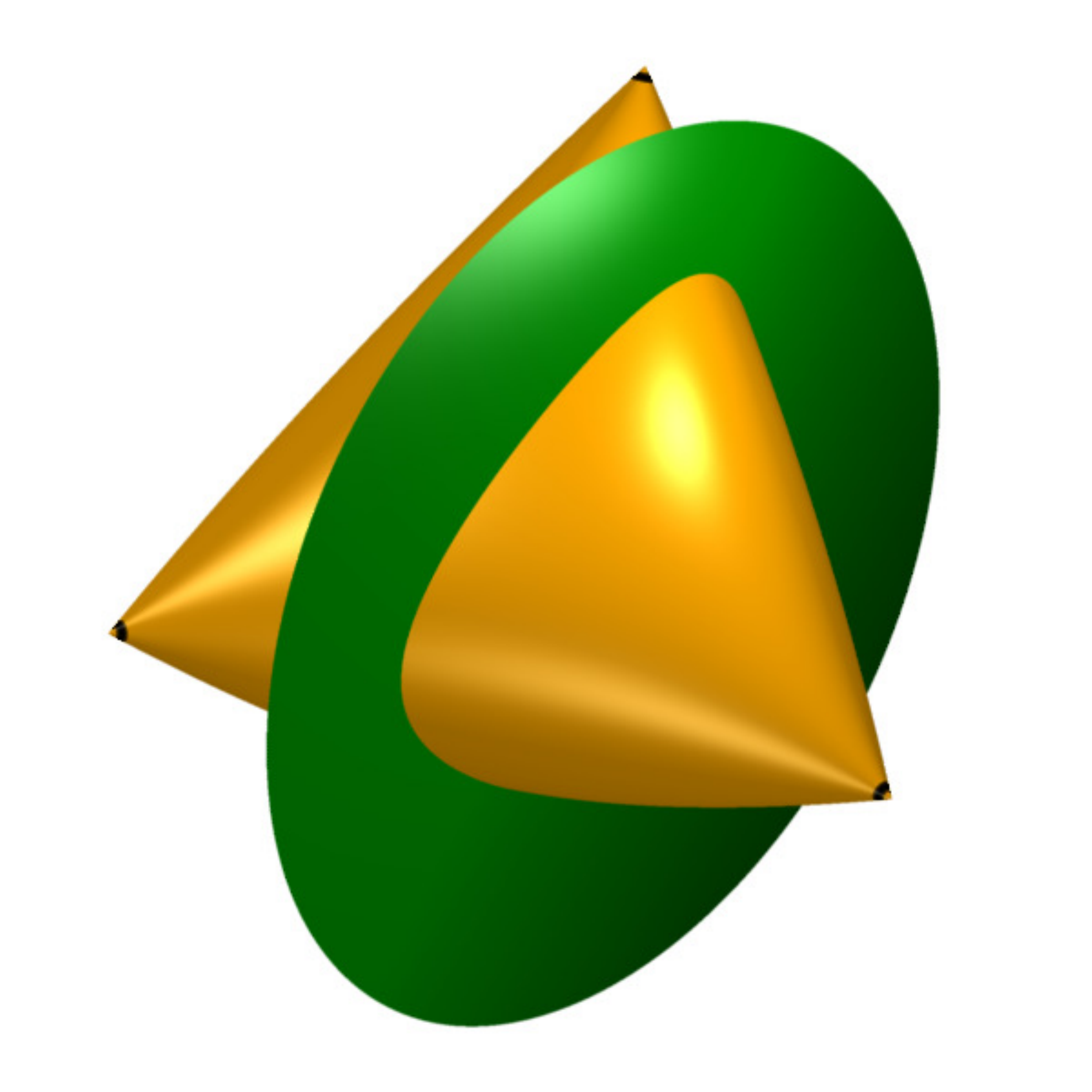} \label{fig_3d_a}}\qquad\qquad
\subfigure[$\mathcal{K}_G$]{\includegraphics[scale=0.29]{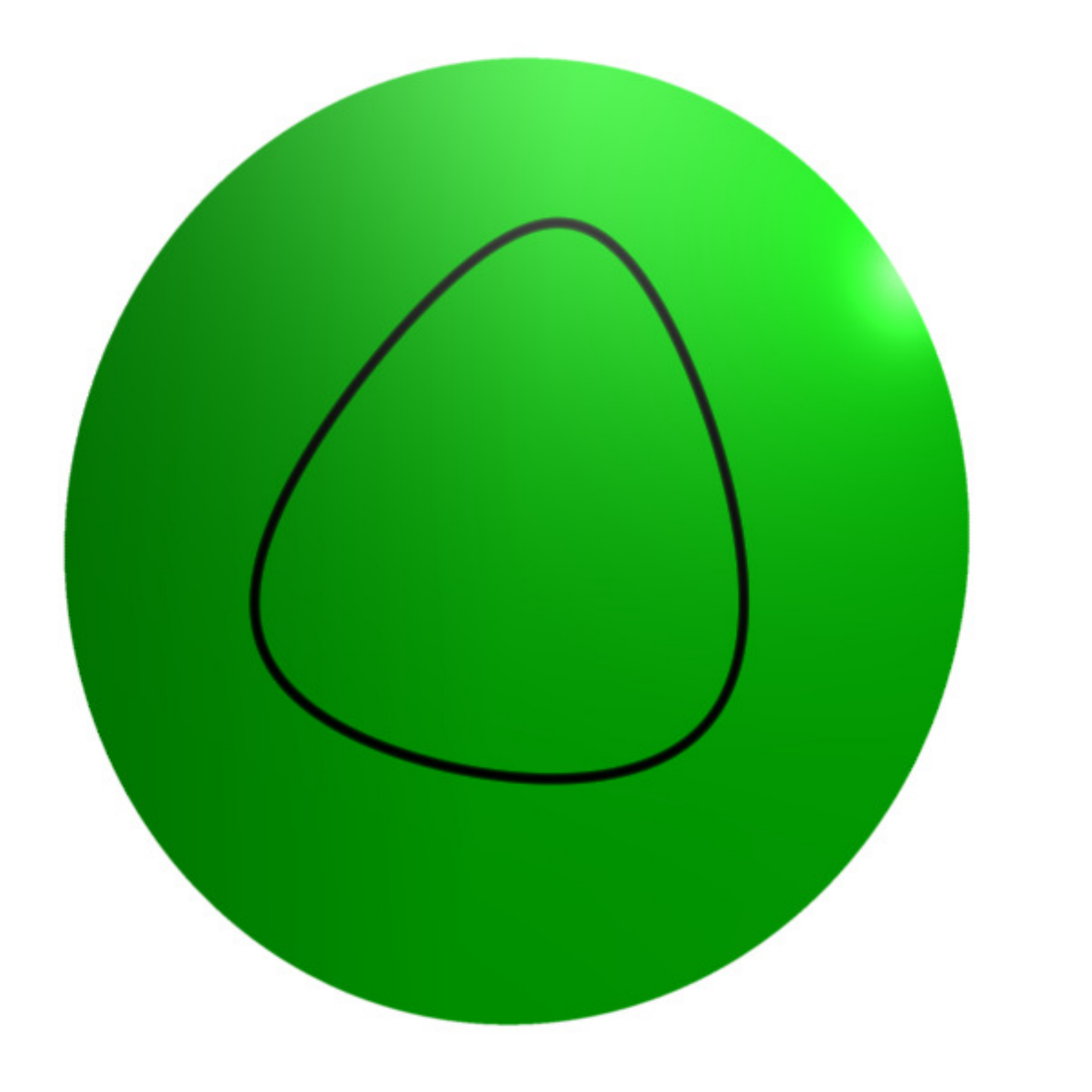} \label{fig_3d_b}}
\subfigure[$(\mathcal{K}_G)^{-1}$]{\includegraphics[scale=0.3]{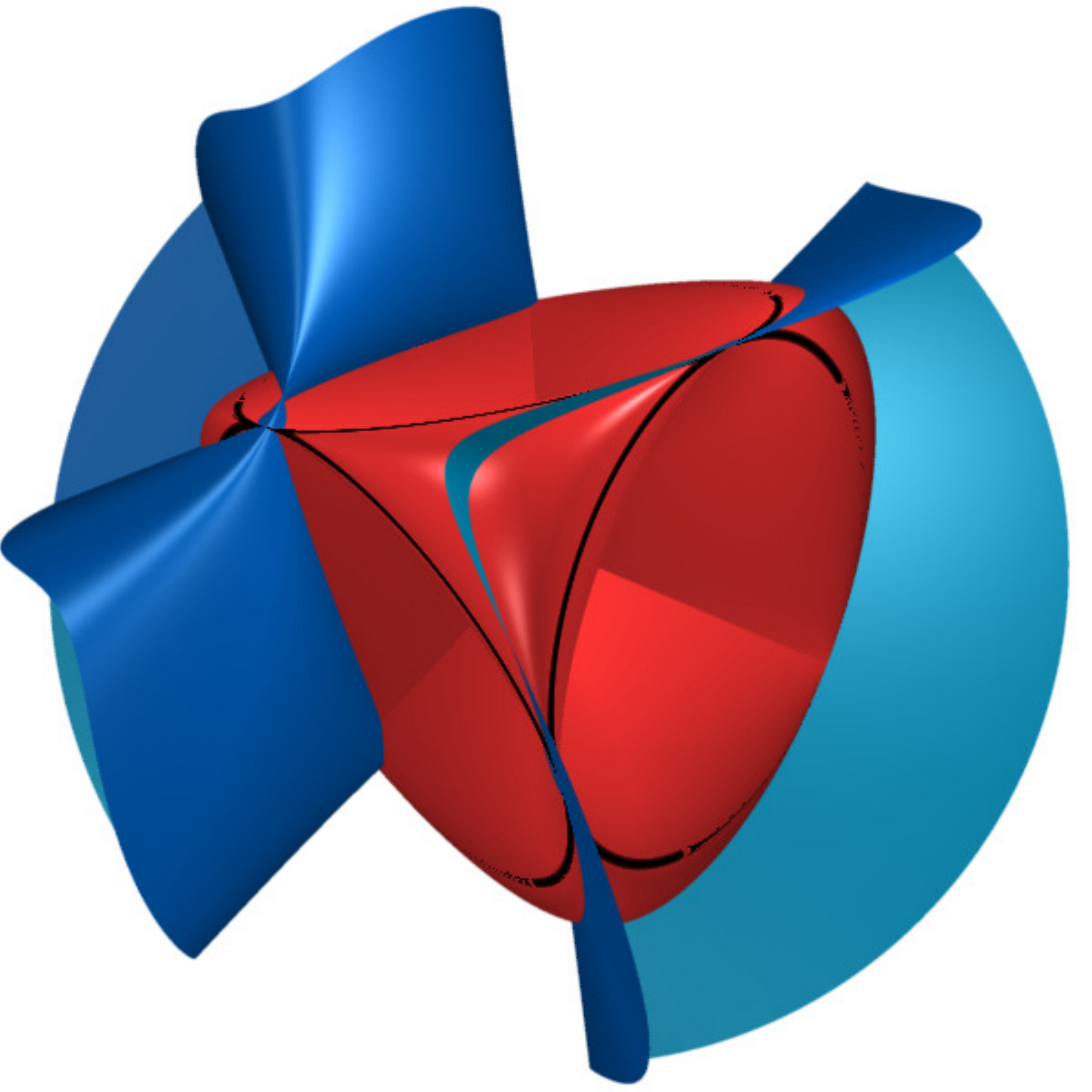} \label{fig_3d_c}}\qquad\qquad\quad
\subfigure[$\mathcal{S}_G$]{\includegraphics[scale=0.4]{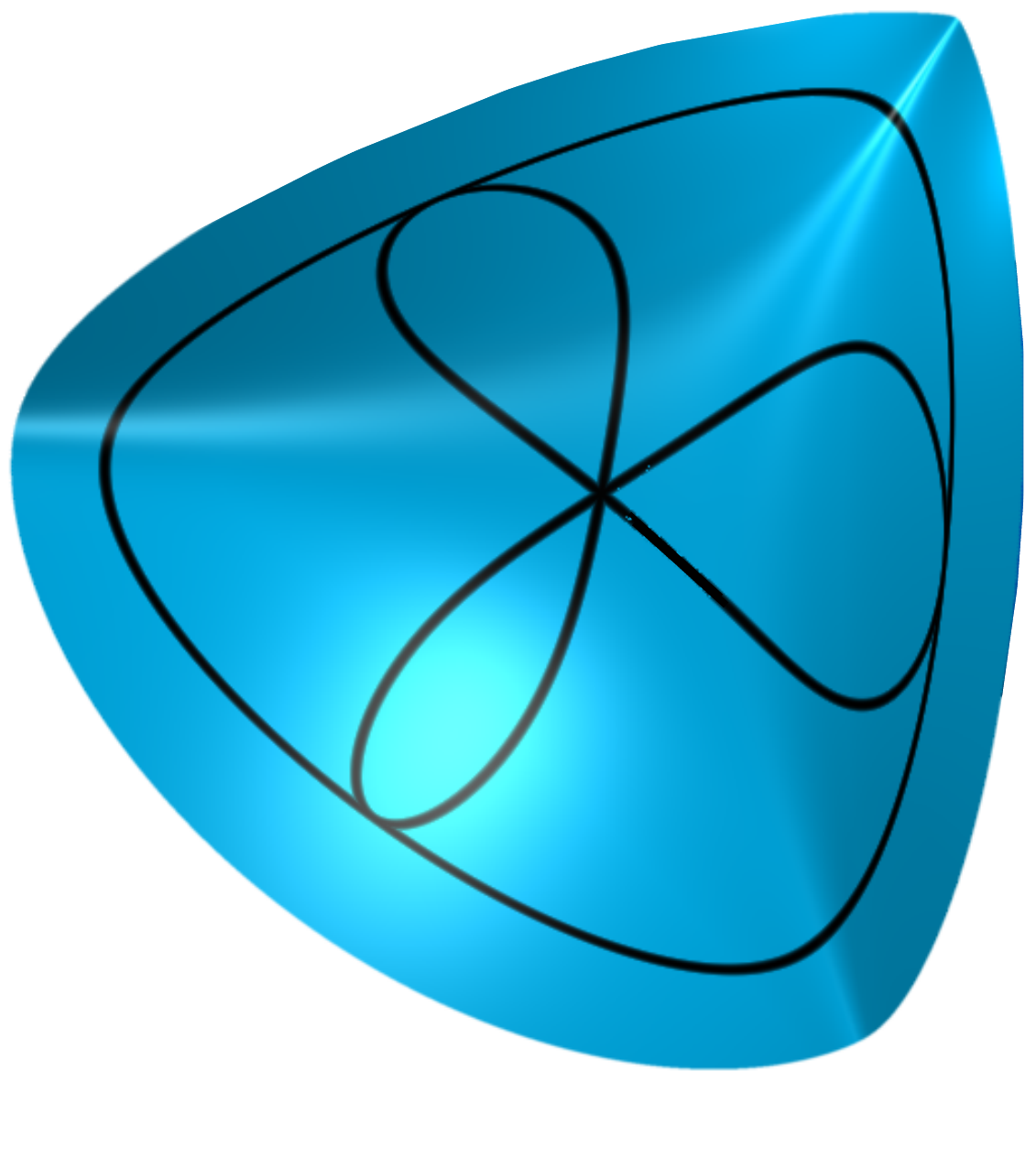}\qquad\quad \label{fig_3d_d}}
\caption{\emph{Geometry of Gaussian graphical models for $p=3$. The tetrahedral-shaped pillow in (a) corresponds to the set of all $3\times 3$ concentration matrices with ones on the diagonal. The linear subspace in (a) is defined by the missing edges in $G$. The resulting cone of concentration matrices is shown in (b). The corresponding set of covariance matrices is shown in (c), and the cone of sufficient statistics $\mathcal{S}_G$, dual to $\mathcal{K}_G$, is shown in (d).}}
\label{fig_3d}
\end{figure}

Theorem~\ref{thm_fiber} was first proven for Gaussian graphical models by Dempster~\cite{Dempster} and later more generally for regular exponential families in~\cite{Barndorff_1978, Brown_exponential}. One can show that the map 
$$\pi_G\circ (\cdot)^{-1}: \mathcal{K}_G\to\mathcal{S}_G$$ 
in Figure~\ref{fig_cones} corresponds to the gradient of the log-partition function. To embed this result into the theory of regular exponential families, we denote canonical parameters by $\theta$, minimal sufficient statistics by $t(X)$, and the log-partition function of a regular exponential family by $A(\theta)$. Then the theory of regular exponential families (see e.g.~\cite{Barndorff_1978, Brown_exponential}) implies that the gradient of the log-partition function $\nabla A(\cdot)$ defines a homeomorphism between the space of canonical parameters and the relative interior of the convex hull of sufficient statistics, and it is defined by $\nabla A(\theta) = \mathbb{E}_{\theta}(t(X))$. For Gaussian models we have $A(\theta) = \log\det(\theta)$; the algebraic structure in maximum likelihood estimation for Gaussian graphical models is a consequence of the fact that $\nabla A(\cdot)$ is a rational function. 

The geometric results and duality theory that hold for Gaussian graphical models can be extended to all regular exponential families~\cite{Barndorff_1978, Brown_exponential}. The algebraic picture can be extended to exponential families where $\nabla A(\cdot)$ is a rational function. This was shown in~\cite{exponential_varieties}, where it was proven that such exponential families are defined by \emph{hyperbolic polynomials}.

%From the theory of exponential families it is known that the map is a homeomorphism given by the gradient of the partition function. For Gaussian graphical models ... Analyzed the special case when the gradient of the partition function is given by poynomial equations. generalization of Gaussian setting; this leads to hyperbolic exponential families studied in ...

The problem of existence of the MLE can be studied at the level of sufficient statistics, i.e.~in the cone $\mathcal{S}_{G}$, or at the level of observations. As explained in Section~\ref{sec:pd_completion}, the MLE exists if and only if the sufficient statistics $S_G$ lie in the interior of the cone $\mathcal{S}_G$. Hence, analyzing existence of the MLE at the level of sufficient statistics requires analyzing the boundary of the cone $\mathcal{S}_G$. The boundary of $\mathcal{K}_G$ is defined by the hypersurface $\det(K)=0$ with $K_{i,j}=0$ for all $(i,j)\notin E^*$. It has been shown in~\cite{Sturmfels_Uhler} that the boundary of the cone $\mathcal{S}_G$ can be obtained by studying the dual of the variety defined by $\det(K)=0$. This algebraic analysis results in conditions that characterize existence of the MLE at the level of sufficient statistics. 

But perhaps more interesting from a statistical point of view, is a characterization of existence of the MLE at the level of observations. Note that if ${\rm rank}(S) < p$ then it can happen that ${\rm fiber}_{\mathcal{L}}(S)$ is empty,
in which case the MLE does not exist for $(\mathcal{L},S)$. In the next section, we discuss conditions on the number of observations $n$, or equivalently on the rank of $S$, that ensure existence of the MLE with probability 1 for particular classes of graphs.
 %In the next section we discuss particular classes of graphs and the minimum rank that is required for the MLE to exist. 

%Our motivating statistical problem is to
%identify conditions on the pair $(\mathcal{L},S)$ that ensure
%the existence of the MLE.
% This will involve studying the geometry of
%the semi-algebraic set $\mathcal{K}_\mathcal{L}^{-1}$ and of
%the algebraic function
%$\, S \mapsto \hat{\Sigma}\,$
%which takes a sample covariance matrix to its MLE in
%$\mathcal{K}_\mathcal{L}^{-1}$.
%

\section{Existence of the MLE for various classes of graphs}
\label{sec:MLE_existence}

%After having studied the problem of existence of the MLE at the level of sufficient statistics in the previous chapter, we now want to find the minimum number of observations needed for the existence of the MLE in a Gaussian graphical model. 

Since the Gaussian density is strictly positive, $\textrm{rank}(S)=\min(n,p)$ with probability 1. The \emph{maximum likelihood threshold} of a graph $G$, denoted $\textrm{mlt}(G)$, is defined as the minimum number of observations $n$ such that the MLE in the Gaussian graphical model with graph $G$ exists with probability 1. This is equivalent to the smallest integer $n$ such that for all generic positive semidefinite matrices $S$ of rank $n$ there exists a positive definite matrix $\Sigma$ with $S_G = \Sigma_G$. Although in this section we only consider Gaussian graphical models, note that this definition can easily be extended to general linear Gaussian concentration models.

The maximum likelihood threshold of a graph was introduced by Gross and Sullivant in~\cite{Gross_Sullivant}. Ben-David~\cite{Ben-David} introduced a related but different notion, the \emph{Gaussian rank} of a graph, namely the smallest $n$ such that the MLE exists for every positive semidefinite matrix $S$ of rank~$n$ for which every $n\times n$ principal submatrix is non-singular. Note that with probability 1 every $n\times n$ principal submatrix of a sample covariance matrix based on $n$ i.i.d.~samples from a Gaussian distribution is non-singular. Hence, the Gaussian rank of $G$ is an upper bound on $\textrm{mlt}(G)$. Since a sample covariance matrix of size $p\times p$ based on $n \leq p$ observations from a Gaussian population is of rank $n$ with probability 1, we here concentrate on the maximum likelihood threshold of a graph. 

A \emph{clique} in a graph $G$ is a completely connected subgraph of $G$. We denote by $q(G)$ the maximal clique-size of $G$. It is clear that the MLE cannot exist if $n<q(G)$, since otherwise the partial matrix $S_G$ would contain a completely specified submatrix that is not positive definite (the submatrix corresponding to the maximal clique). This results in a lower bound for the maximum likelihood threshold of a graph, namely 
$$\textrm{mlt}(G)\geq q(G).$$

For chordal graphs, Theorem~\ref{thm_chordal} shows that the MLE exists with probability 1 if and only if $n\geq q(G)$. Hence for chordal graphs it holds that $\textrm{mlt}(G)=q(G)$. However, this is not the case in general as shown by the following example.

\begin{example}
\label{ex_4cyle}
Let $G$ be the 4-cycle with edges $(1,2)$, $(2,3)$, $(3,4)$, and $(1,4)$. Then $q(G) =2$.  We define $X\in\mathbb{R}^{4\times 2}$ consisting of 2 samples in $\mathbb{R}^4$ and the corresponding sample covariance matrix $S=XX^T$ by
$$X=\begin{pmatrix} 1 & 0 \\ \frac{1}{\sqrt{2}} & \frac{1}{\sqrt{2}} \\ 0 & 1 \\ -\frac{1}{\sqrt{2}} & \frac{1}{\sqrt{2}}\end{pmatrix} \quad \textrm{and hence} \quad S=\begin{pmatrix} 1 & \frac{1}{\sqrt{2}} & 0 & -\frac{1}{\sqrt{2}} \\  \frac{1}{\sqrt{2}} & 1 & \frac{1}{\sqrt{2}} & 0 \\ 0 & \frac{1}{\sqrt{2}} & 1 & \frac{1}{\sqrt{2}} \\ -\frac{1}{\sqrt{2}} & 0 & \frac{1}{\sqrt{2}} & 1\end{pmatrix}.$$
One can check that $S_G$ cannot be completed to a positive definite matrix. In addition, there exists an open ball around $X$ for which the MLE does not exist. This shows that in general for non-chordal graphs $\textrm{mlt}(G)>q(G)$.
\end{example}

From Theorem~\ref{thm_chordal} we can determine an upper bound on $\textrm{mlt}(G)$ for general graphs. For a graph $G = (V, E)$ we denote by $G^+ = (V, E^+)$ a \emph{chordal cover} of $G$, i.e.~a chordal graph satisfying $E \subseteq E^+$. We denote the maximal clique size of $G^+$  by $q^+$. A \emph{minimal chordal cover}, denoted by $G^{\#} = (V, E^{\#})$, is a chordal cover of $G$, whose maximal clique size $q^{\#}$ achieves $q^{\#} = min(q^+)$ over all chordal covers of $G$. The quantity $q^{\#}(G)-1$ is also known as the \emph{treewidth} of $G$. It follows directly from Theorem~\ref{thm_chordal} that
$$\textrm{mlt}(G)\leq q^{\#}(G),$$
since if $S_{G^{\#}}$ can be completed to a positive definite matrix, so can $S_G$. 

If $G$ is a cycle, then $q(G)=2$ and $q^{\#}(G)=3$. Hence the MLE does not exist for $n=1$ and it exists with probability 1 for $n=3$. From Example~\ref{ex_4cyle} we can conclude that for cycles $\textrm{mlt}(G)=3$. Buhl~\cite{Buhl} shows that for $n=2$ the MLE exists with probability in $(0,1)$. More precisely, for $n=2$ we can view the two samples as vectors $x_1, \dots , x_p\in\mathbb{R}^2$. We denote by $\ell_1, \dots , \ell_p$ the lines defined by $x_1, \dots , x_p$. Then Buhl~\cite{Buhl} shows using an intricate trigonometric argument that the MLE for the $p$-cyle for $n=2$ exists if and only if the lines $\ell_1, \dots , \ell_p$ do not occur in one of the two sequences conforming with the ordering in the cycle $G$ as shown in Figure~\ref{cyle_conf}. In the following, we give an algebraic proof of this result by using an intriguing characterization of positive definiteness for $3\times 3$ symmetric matrices given in~\cite{Barrett1}.

\begin{figure}[t!]
\centering
\subfigure[$p$-cycle]{\includegraphics[scale=0.31]{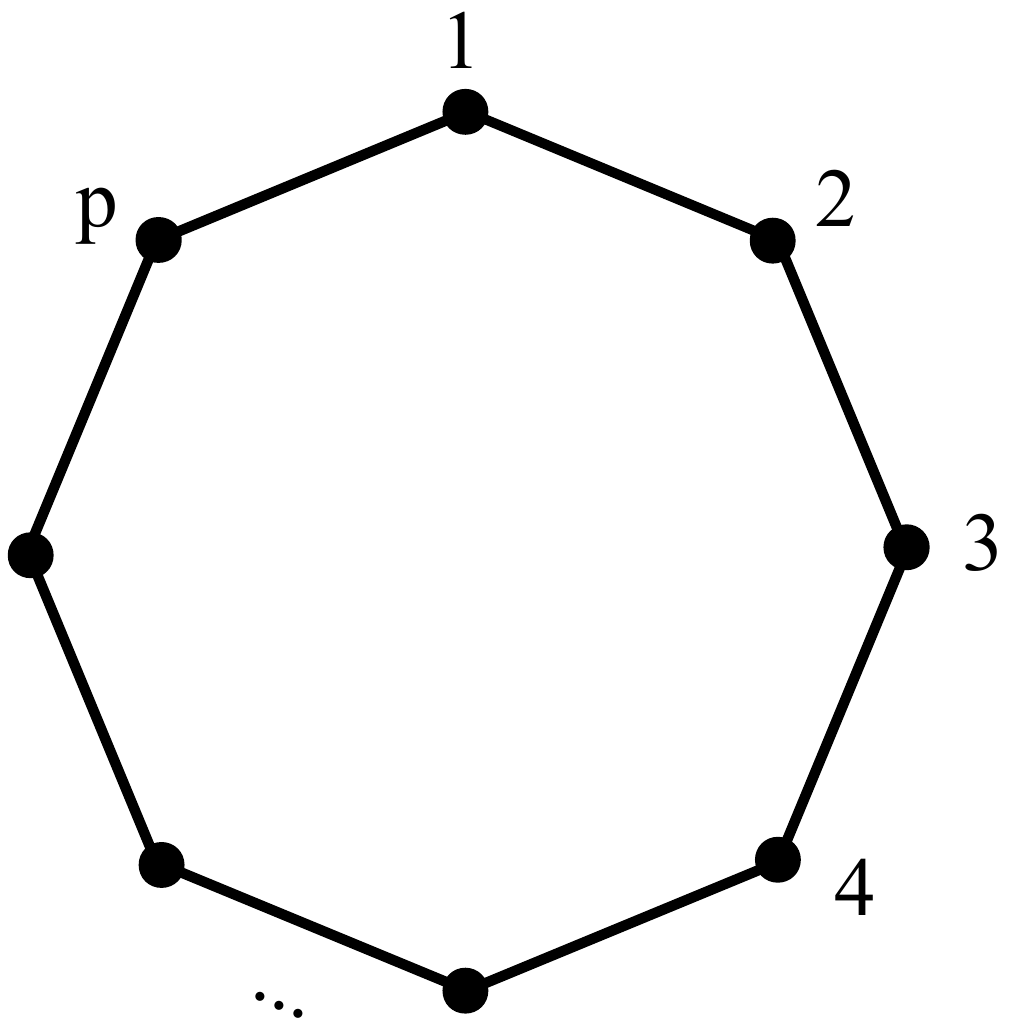}}\qquad\qquad
\subfigure[Line configurations for which the MLE does not exist.]{\includegraphics[scale=0.39]{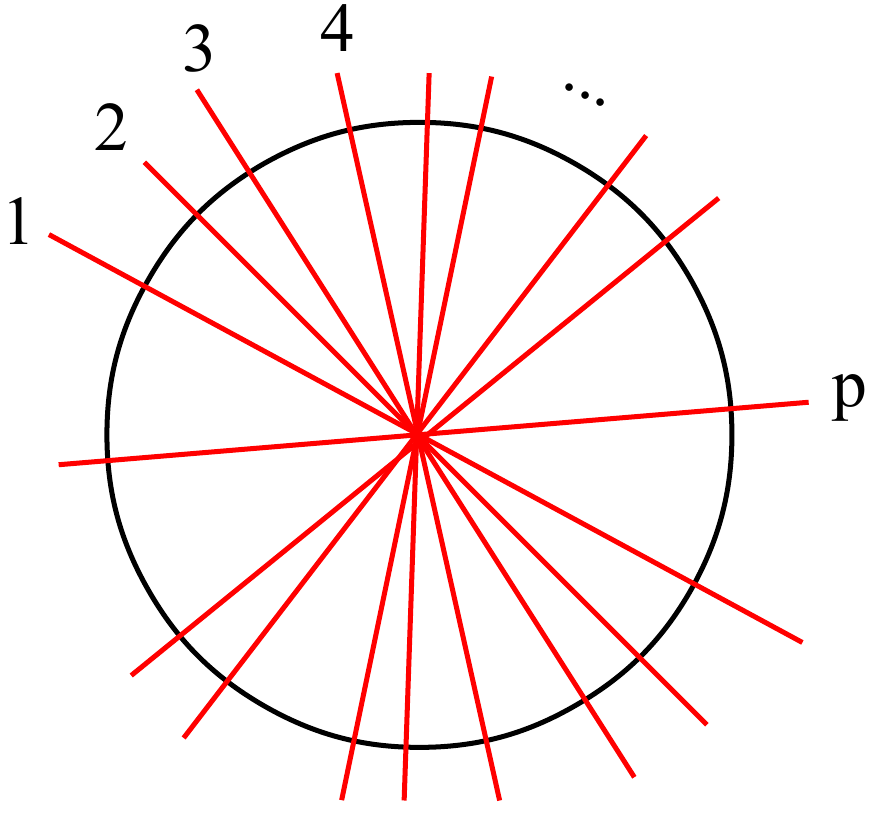}\qquad\qquad\quad \includegraphics[scale=0.39]{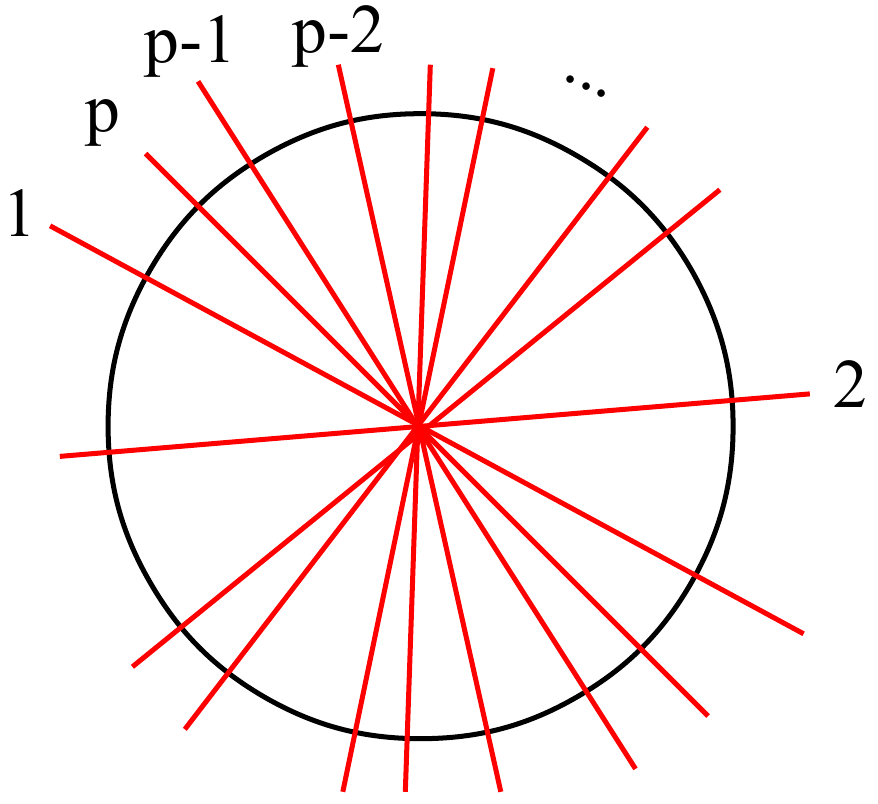}}
\caption{\emph{Buhl's geometric criterion~\cite{Buhl}  for existence of the MLE for $n=2$ in a Gaussian graphical model on the $p$-cycle.}}
\label{cyle_conf}
\end{figure}

\begin{proposition}[Barrett et al.~\cite{Barrett1}]
The matrix
$$\begin{pmatrix} 1 & \cos(\alpha) & \cos(\beta) \\ \cos(\alpha) & 1 & \cos(\gamma) \\ 
\cos(\beta) & \cos(\gamma) & 1\end{pmatrix}$$ with $0 < \alpha, \beta, \gamma < \pi$ is positive definite if and only if
$$\alpha< \beta + \gamma, \quad \beta< \alpha + \gamma, \quad \gamma< \alpha + \beta, \quad \alpha+ \beta + \gamma < 2\pi.$$
\end{proposition}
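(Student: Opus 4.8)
The plan is to reduce positive definiteness of this $3\times 3$ matrix, call it $M$, to positivity of $\det M$, and then to factor $\det M$ into a product of four sines whose signs are governed exactly by the four inequalities. By Sylvester's criterion, $M\succ 0$ if and only if all leading principal minors are positive. The $1\times 1$ minor is $1$, and the leading $2\times 2$ minor is $1-\cos^2\alpha=\sin^2\alpha$, which is positive since $0<\alpha<\pi$. So the only condition left to analyze is $\det M>0$, and a direct expansion gives
$$\det M = 1-\cos^2\alpha-\cos^2\beta-\cos^2\gamma+2\cos\alpha\cos\beta\cos\gamma.$$

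Next I would factor this expression. Viewing the right-hand side as a quadratic polynomial in $\cos\gamma$, its roots are $\cos\gamma=\cos\alpha\cos\beta\pm\sin\alpha\sin\beta=\cos(\alpha\mp\beta)$, so
$$\det M = \big(\cos(\alpha-\beta)-\cos\gamma\big)\big(\cos\gamma-\cos(\alpha+\beta)\big).$$
Applying the sum-to-product identity $\cos x-\cos y=-2\sin\frac{x+y}{2}\sin\frac{x-y}{2}$ to each factor and writing $s=\frac{\alpha+\beta+\gamma}{2}$, this becomes
$$\det M = 4\,\sin(s-\alpha)\,\sin(s-\beta)\,\sin(s-\gamma)\,\sin s,$$
a spherical analogue of Heron's formula. (Indeed, the four inequalities are precisely the conditions for $\alpha,\beta,\gamma$ to be the side lengths of a spherical triangle, which serves as a useful sanity check: $M$ is the Gram matrix of three unit vectors with pairwise angles $\alpha,\beta,\gamma$, and $M\succ 0$ means these vectors are linearly independent.)

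It then remains to show that this product is positive if and only if each of the four inequalities holds. Using $0<\alpha,\beta,\gamma<\pi$, one checks that $s-\alpha,s-\beta,s-\gamma\in(-\tfrac{\pi}{2},\pi)$ and $s\in(0,\tfrac{3\pi}{2})$; on these ranges $\sin(s-\alpha)>0$ iff $s-\alpha>0$ iff $\alpha<\beta+\gamma$ (and similarly for $\beta,\gamma$), while $\sin s>0$ iff $s<\pi$ iff $\alpha+\beta+\gamma<2\pi$. This gives the ``if'' direction immediately. For the converse, suppose $\det M>0$; then no sine factor vanishes, so an even number of them are negative. I would then argue that in fact at most one can be negative: if $s-\alpha<0$ and $s-\beta<0$, adding gives $2s<\alpha+\beta$, i.e.\ $\gamma<0$, a contradiction; and if $s-\alpha<0$ while $\sin s<0$ (i.e.\ $s>\pi$), then $\alpha>s>\pi$, again impossible. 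Hence zero factors are negative, i.e.\ all four inequalities hold.

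The main obstacle is the bookkeeping in this last step: pinning down the precise ranges of the four arguments from the hypothesis $0<\alpha,\beta,\gamma<\pi$, carrying the sum-to-product manipulation through with all signs correct, and verifying that no two of the four sine factors can be simultaneously negative. The algebra itself --- the determinant expansion and the factorization --- is entirely routine.
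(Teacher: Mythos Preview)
Your argument is correct. The determinant expansion, the quadratic factorization in $\cos\gamma$, the sum-to-product step yielding the spherical Heron identity
\[
\det M = 4\,\sin(s-\alpha)\,\sin(s-\beta)\,\sin(s-\gamma)\,\sin s,
\]
and the parity argument ruling out two negative factors all check out. One small point worth making explicit: when you say ``at most one can be negative'' you should note that this also excludes the possibility of four negative factors (which is immediate, since four negatives would in particular give two negatives among $s-\alpha,s-\beta,s-\gamma$, already handled by your Case~1).

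As for comparison with the paper: the paper does not actually prove this proposition. It is stated with attribution to Barrett et al.\ and then used as input to the characterization for $p$-cycles (Corollary~\ref{cor_cycle}). So your write-up supplies a self-contained proof where the paper simply cites one. The approach you take --- reducing to $\det M>0$ via Sylvester and then factoring the determinant as a product of sines --- is the standard one and is essentially the argument in the original Barrett--Johnson--Loewy paper, so you are not diverging from the literature, only filling in what the present paper omits.
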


Let $G$ denote the $p$-cycle. Then, as shown in~\cite{Barrett1}, this result can be used to give a characterization for completability of a $G$-partial matrix to a positive definite matrix through induction on the cycle length $p$.

\begin{corollary}[Barrett et al.~\cite{Barrett1}]
\label{cor_cycle}
Let $G$ be the $p$-cycle. Then the $G$-partial matrix
$$\begin{pmatrix} 1 & \cos(\theta_{1}) & & & \cos(\theta_{p})  \\  \cos(\theta_{1}) & 1 & \cos(\theta_{2}) & ? & \\ & \cos(\theta_{2}) & 1 & & & \\  & ? & & \ddots & \cos(\theta_{p-1}) \\  \cos(\theta_{p}) & & & \cos(\theta_{p-1})& 1 \end{pmatrix}$$
with $0 < \theta_{1}, \theta_{2}, \dots \theta_{p} < \pi$ has a positive definite completion if and only if for each $S\subseteq [p]$ with $|S|$ odd,
$$\sum_{i\in S} \theta_i < (|S|-1)\pi + \sum_{j\notin S} \theta_j.$$
\end{corollary}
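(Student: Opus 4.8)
The plan is to prove Corollary~\ref{cor_cycle} by induction on the cycle length $p$, using the preceding $3\times 3$ positive-definiteness criterion as the base case and the constructive completion procedure for chordal graphs (Theorem~\ref{thm_chordal}) to handle the inductive step. For $p=3$ the $G$-partial matrix is already completely specified, so the claim is exactly the Barrett--et--al.\ proposition: the odd-cardinality subsets $S\subseteq[3]$ are the singletons (giving $\theta_i < \theta_j+\theta_k$, i.e.\ the three triangle inequalities) and $S=[3]$ (giving $\theta_1+\theta_2+\theta_3 < 2\pi$), so the two lists of inequalities coincide.

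For the inductive step, suppose the claim holds for the $(p-1)$-cycle and consider the $p$-cycle with given angles $\theta_1,\dots,\theta_p$. The idea is to fill in one ``chord'' entry, say the $(p-1,1)$ entry, writing it as $\cos(\psi)$ for a suitable $\psi\in(0,\pi)$. Completing that entry splits the problem: the partial matrix has a positive definite completion if and only if (i) the $3\times 3$ block on indices $\{p-1,p,1\}$ with off-diagonal cosines $\cos(\theta_{p-1}),\cos(\theta_p),\cos(\psi)$ is positive definite, and (ii) the $(p-1)$-cycle partial matrix on indices $\{1,2,\dots,p-1\}$ with angles $\theta_1,\dots,\theta_{p-2},\psi$ has a positive definite completion. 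This is precisely the chordal-decomposition mechanism underlying Theorem~\ref{thm_chordal}: adding the chord $(p-1,1)$ triangulates the two-clique-sum, and a partial matrix on the triangulated graph is PD-completable iff both completely specified cliques are. By the $3\times 3$ criterion, condition (i) is the system $|\theta_{p-1}-\theta_p|<\psi<\min(\theta_{p-1}+\theta_p,\,2\pi-\theta_{p-1}-\theta_p)$, which describes a nonempty open interval $I_1$ of feasible $\psi$ exactly when $\theta_{p-1}+\theta_p<2\pi$ ... and one checks this is automatically implied when the target inequalities hold. By the induction hypothesis, condition (ii) holds for a given $\psi$ iff $\psi$ lies in the intersection $I_2$ of the open intervals cut out, for each odd $S'\subseteq[p-1]$, by $\sum_{i\in S'}\theta_i^{(\psi)}<(|S'|-1)\pi+\sum_{j\notin S'}\theta_j^{(\psi)}$, where $\theta^{(\psi)}$ is the angle vector with $\psi$ in the last slot; each such inequality is linear in $\psi$, so $I_2$ is again an interval. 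The whole $p$-cycle is thus PD-completable iff $I_1\cap I_2\neq\emptyset$.

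The substance of the argument is to show that $I_1\cap I_2\neq\emptyset$ if and only if the stated system of odd-subset inequalities for $\theta_1,\dots,\theta_p$ holds. One direction is a packaging exercise: if a valid $\psi$ exists, then combining the inequalities for an odd $S\subseteq[p]$ with $p\notin S$ (or with $p\in S$) against the $I_1$-constraints on $\psi$ eliminates $\psi$ and yields the $p$-cycle inequality for $S$; here one uses that $S\mapsto S\triangle\{p-1,p\}$ or $S\mapsto S\cup\{p-1\}$ converts an odd subset of $[p]$ into an odd subset of $[p-1]$ appearing in $I_2$, and the $\psi$-coefficients cancel. The converse -- producing a witness $\psi$ from the $p$-cycle inequalities -- is the main obstacle: one must verify that $\max(\text{left endpoints of }I_1\cap I_2)<\min(\text{right endpoints})$, i.e.\ every lower bound on $\psi$ (from both $I_1$ and $I_2$) is strictly below every upper bound. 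Each such pairwise comparison, after clearing the linear dependence on $\psi$, reduces to one of the odd-subset inequalities for $\theta_1,\dots,\theta_p$ (or to a triviality like $0<2\pi$), but bookkeeping the roughly four families of endpoints and confirming that every lower-upper pair matches a genuine hypothesis -- rather than something stronger -- is the delicate part. I would organize this by indexing lower bounds by odd subsets $S$ with $p\in S$ and upper bounds by odd subsets with $p\notin S$ (together with the two $I_1$ endpoints, which correspond to $S=\{p\}$-type degenerate cases), and showing the comparison for the pair $(S_-,S_+)$ is exactly the $p$-cycle inequality for the odd set $S_-\triangle S_+$ suitably interpreted. Once that correspondence is set up, the equivalence follows, completing the induction.
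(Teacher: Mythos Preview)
Your proposal is correct and takes exactly the approach the paper indicates: the paper does not give a detailed argument but simply states that the result follows from the $3\times 3$ criterion ``through induction on the cycle length $p$,'' citing Barrett et al., and your sketch fleshes out precisely that induction by adding the chord $(p-1,1)$ and splitting into a triangle and a $(p-1)$-cycle. One small wording issue: the decomposition step you need is the clique-sum lemma (PD-completability factors across a clique separator), which is the mechanism \emph{behind} Theorem~\ref{thm_chordal} rather than the theorem itself, since the $p$-cycle with one chord is not chordal for $p\ge 5$ and the $(p-1)$-cycle piece is not a ``completely specified clique.''
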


Buhl's result~\cite{Buhl} can easily be deduced from this algebraic result about the existence of positive definite completions: For $n=2$ we view the observations as vectors $x_1, \dots , x_p\in\mathbb{R}^2$. Note that we can rescale and rotate the data vectors $x_1,\dots , x_{p}$ (i.e.~perform an orthogonal transformation) without changing the problem of existence of the MLE. So without loss of generality we can assume that the vectors $x_1,\dots , x_{p}\in\R^2$ have length one, lie in the upper unit half circle, and $x_1=(1,0)$. Now we denote by $\theta_i$ the angle between $x_i$ and $x_{i+1}$, where $x_{p+1}:=x_1$. One can show that the angular conditions in Corollary~\ref{cor_cycle} are equivalent to requiring that the vectors $x_1,\dots , x_{p}\in\R^2$ do not occur in one of the two sequences conforming with the ordering in the cycle $G$ as shown in Figure~\ref{cyle_conf}.

Hence, for a $G$-partial matrix to be completable to a positive definite matrix, it is necessary that every submatrix corresponding to a clique in the graph is positive definite and every partial submatrix corresponding to a cycle in $G$ satisfies the conditions in Corollary~\ref{cor_cycle}. Barrett et al.~\cite{Barrett2} characterized the graphs for which these conditions are sufficient for existence of a positive definite completion. They showed that this is the case for graphs that have a chordal cover with no new 4-cliques. Such graphs can be obtained as a \emph{clique sum} of chordal graphs and series-parallel graphs (i.e.~graphs $G$ with $q^{\#}(G)\leq 3$)~\cite{Johnson_McKee}. To be more precise, for such graphs $G=(V,E)$ the vertex set can be decomposed into three disjoint subsets $V=V_1\cup V_2\cup V_3$ such that there are no edges between $V_1$ and $V_3$, the subgraph induced by $V_2$ is a clique, and the subgraphs induced by $V_1\cup V_2$ and $V_2\cup V_3$ are either chordal or series-parallel graphs or can themselves be decomposed as a clique sum of chordal or series-parallel graphs. For such graphs it follows that 
$$\textrm{mlt}(G)=\max(3, q(G))=q^{\#}(G).$$ 
This raises the question whether there exist graphs for which $\textrm{mlt}(G)<q^{\#}(G)$, i.e., graphs for which the MLE exists with probability 1 even if the number of observations is strictly smaller than the maximal clique size in a minimal chordal cover of $G$. This question has been answered to the positive for $3\times 3$ grids using an algebraic argument in~\cite{Uhler_2012} and more generally for grids of size $m\times m$ using a combinatorial argument in~\cite{Gross_Sullivant}. In particular, let $G$ be a grid of size $m\times m$. Then $q^{\#}(G)=m+1$, but it was shown in~\cite{Gross_Sullivant} that the MLE exists with probability 1 for $n=3$, independent of the grid size $m$. Grids are a special class of planar graphs. Gross and Sullivant~\cite{Gross_Sullivant} more generally proved that for any planar graph it holds that $\textrm{mlt}(G)\leq 4$.

%Using the geometric description of the cone of sufficient statistics in Chapter, we give an an algebraic elimination criterion, which allows us to find exact lower bounds on the number of observations needed to ensure that the MLE exists with probability one.
%
%
%Work of \cite{Buhl} and \cite{Barrett1} addresses this issue for
%Gaussian graphical models where the underlying graph is a cycle; see Section \ref{sec:graphical} below for a geometric approach.
%

\section{Algorithms for computing the MLE}
\label{sec:MLE_alg}

After having discussed when the MLE exists, we now turn to the question of how to compute the MLE for Gaussian graphical models. As described in Section~\ref{sec:likelihood}, determining the MLE in a Gaussian model with linear constraints on the inverse covariance matrix is a convex optimization problem. Hence, it can be solved in polynomial time for instance using interior point methods~\cite{boyd_Vandenberghe}. These are implemented for example in~\verb+cvx+, a user-friendly \verb+matlab+ software for disciplined convex programming~\cite{cvx}. 

\begin{algorithm}[!b]
\caption{Coordinate descent on $\Sigma$}
\label{algthm:IPS_opt_Z}
\begin{algorithmic}
\begin{STATE}
\vspace{0.2cm}
{\bf Input:\;\;\;}
Graph $G=(V,E)$, sample covariance matrix $S$, and precision $\epsilon$.
%\vspace{0.2cm}

{\bf Output:} 
MLE $\hat{\Sigma}$.

\vspace{0.2cm}

\begin{enumerate}

\item[(1)] Let $\Sigma^0 = S$
\item[(2)] Cycle through $(u,v)\notin E^*$ and solve the following optimization problem:
\begin{equation*}
\begin{aligned}
& \underset{\Sigma\succeq 0}{\text{maximize}}
& & \log\det(\Sigma)   \\
& \text{subject to}
&& \Sigma_{i,j} = \Sigma^0_{i,j} \;\textrm{ for all } (i,j)\neq (u,v).
\end{aligned}
\end{equation*}
and update $\Sigma^1 := \Sigma$.
\item[(3)] If $|\!|\Sigma^0-\Sigma^1|\!|_1 <\epsilon$, let $\hat{\Sigma}:=\Sigma^1$. Otherwise, let $\Sigma^0:=\Sigma^1$ and return to (2).
\end{enumerate}
%\end{alg}
\end{STATE}
\end{algorithmic}
\end{algorithm}

Although interior point methods run in polynomial time, for very large Gaussian graphical models it is usually more practical to apply coordinate descent algorithms. The idea of using coordinate descent algorithms for computing the MLE in Gaussian graphical models was already present in the original paper by Dempster~\cite{Dempster}. Coordinate descent on the entries of $\Sigma$ was first implemented by Wermuth and Scheidt~\cite{Wermuth_Scheidt} and is shown in Algorithm~\ref{algthm:IPS_opt_Z}. In this algorithm, we start with $\Sigma^0 = S$ and iteratively update the entries $(i,j)\notin E^*$ by maximizing the log-likelihood in direction $\Sigma_{i,j}$ and keeping all other entries fixed. 

Note that step (2) in Algorithm~\ref{algthm:IPS_opt_Z} can be given in closed-form: Let $A=\{u,v\}$ and $B=V\setminus A$. We now show that the objective function in step (2) of Algorithm~\ref{algthm:IPS_opt_Z} can be written in terms of the $2\times 2$ Schur complement $\Sigma' = \Sigma_{A,A}-\Sigma_{A,B}\Sigma_{B,B}^{-1}\Sigma_{B,A}$. To do this, note that $\det (\Sigma) = \det(\Sigma')\det(\Sigma_{B,B})$. Since $\Sigma_{B,B}$ is held constant in the optimization problem, then up to an additive constant it holds that
$$\log\det (\Sigma)  = \log\det (\Sigma').$$
Thus, the optimization problem in step (2) of Algorithm~\ref{algthm:IPS_opt_Z} is equivalent to
\begin{equation*}
\begin{aligned}
& \underset{\Sigma'\succeq 0}{\text{maximize}}
& & \log\det(\Sigma')\\
& \text{subject to}
&& \Sigma'_{i,i}  = \Sigma^0_{i,i}-\Sigma^0_{i,B}(\Sigma^0_{B,B})^{-1}\Sigma^0_{B,i}, \;\; i\in A,\\
\end{aligned}
\end{equation*}
and the global maximum is attained by $\Sigma'_{u,v} = 0$. Hence, the solution to the univariate optimization problem in step (2) of Algorithm~\ref{algthm:IPS_opt_Z} is
$$\Sigma_{u,v} = \Sigma_{u,B}\Sigma_{B,B}^{-1}\Sigma_{B,v},$$
forcing the corresponding entry of $\Sigma^{-1}$ to be equal to zero. 

Dual to this algorithm, one can define an equivalent algorithm that cycles through entries of the concentration matrix corresponding to $(i,j)\in E$, starting in the identity matrix. This procedure is shown in Algorithm~\ref{algthm:IPS_opt_K}. Similarly as for Algorithm~\ref{algthm:IPS_opt_Z}, the solution to the optimization problem in step (2) can be given in closed-form. Defining as before, $A=\{u,v\}$ and $B=V\setminus A$, then analogously as in the derivation above, one can show that the solution to the optimization problem in step (2) of Algorithm~\ref{algthm:IPS_opt_K} is
$$K_{A,A} = (S_{A,A})^{-1} + K_{A,B}K_{B,B}^{-1}K_{B,A},$$
forcing $\Sigma_{A,A}$ to be equal to $S_{A,A}$. This algorithm, which tries to match the sufficient statistics, is analogous to \emph{iterative proportional scaling} for computing the MLE in contingency tables~\cite{Haberman}. Convergence proofs for both algorithms were given by Speed and Kiiveri~\cite{Speed_Kiiveri}.

\begin{algorithm}[!t]
\caption{Coordinate descent on $K$}
\label{algthm:IPS_opt_K}
\begin{algorithmic}
\begin{STATE}
\vspace{0.2cm}
{\bf Input:\;\;\;}
Graph $G=(V,E)$, sample covariance matrix $S$, and precision $\epsilon$.
%\vspace{0.2cm}

{\bf Output:} 
MLE $\hat{K}$.

\vspace{0.2cm}

\begin{enumerate}

\item[(1)] Let $K^0 = \textrm{Id}$.
\item[(2)] Cycle through $(u,v)\in E$ and solve the following optimization problem:
\begin{equation*}
\begin{aligned}
& \underset{K \succeq 0}{\text{maximize}}
& & \log\det(K) - \textrm{trace}(KS)  \nonumber\\
& \text{subject to}
&& K_{i,j} = K^0_{i,j} \;\textrm{ for all } (i,j)\in (V\times V)\setminus\{(u,u),(v,v),(u,v)\}.
\end{aligned}
\end{equation*}
and update $K^1 := K$.
\item[(3)] If $|\!|K^0-K^1|\!|_1 <\epsilon$, let $\hat{K}:=K^1$. Otherwise, let $K^0:=K^1$ and return to (2).
\end{enumerate}
%\end{alg}
\end{STATE}
\end{algorithmic}
\end{algorithm}

%In the following, we first discuss when there exists a closed-form solution for the MLE. Then we present a coordinate descent algorithm for computing the MLE, which can often be applied to obtain the MLE in larger instances than would be possible using interior point methods.

In general the MLE must be computed iteratively. However, in some cases estimation can be made in closed form. A trivial case when the MLE of a Gaussian graphical model can be given explicitly is for complete graphs: In this case, assuming that the MLE exists, i.e.~$S$ is non-singular, then $\hat{K} = S^{-1}$. In~\cite[Section 5.3.2]{lauritzen1996}, Lauritzen showed that also for chordal graphs the MLE has a closed-form solution. This result is based on the fact that any chordal graph $G=(V,E)$ is a clique sum of cliques, i.e., the vertex set can be decomposed into three disjoint subsets $V=A\cup B\cup C$ such that there are no edges between $A$ and $C$, the subgraph induced by $B$ is a clique, and the subgraphs induced by $A\cup B$ and $B\cup C$ are either cliques or can themselves be decomposed as a clique sum of cliques. In such a decomposition, $B$ is known as a \emph{separator}. In~\cite[Proposition 5.9]{lauritzen1996}, Lauritzen shows that, assuming existence of the MLE, then the MLE for a chordal Gaussian graphical model is given by
\begin{equation}
\label{eq_MLE_chordal}
\hat{K} = \sum_{C\in\mathcal{C}} \left[(S_{C,C})^{-1}\right]^{\textrm{fill}} - \sum_{B\in\mathcal{B}} \left[(S_{B,B})^{-1}\right]^{\textrm{fill}},
\end{equation}
where $\mathcal{C}$ denotes the maximal cliques in $G$, $\mathcal{B}$ denotes the separators in the clique decomposition of $G$ (with multiplicity, i.e., a clique could appear more than once), and $[A_{HH}]^{\textrm{fill}}$ denotes a $p\times p$ matrix, where the submatrix corresponding to $H\subset V$ is given by $A$ and all the other entries are filled with zeros. 

To gain more insight into the formula~(\ref{eq_MLE_chordal}), consider the simple case where the subgraphs corresponding to $A\cup B$ and $B\cup C$ are cliques. Then (\ref{eq_MLE_chordal}) says that the MLE is given by
%The formula for the MLE in (\ref{eq_MLE_chordal}) can easily be obtained by induction and using Schur complements. In particular, if the subgraphs corresponding to $V_1$ and $V_3$ are cliques, then we need to show that the MLE is given by
\begin{equation}
\label{eq_chordal_small}
\hat{K} = \left[S_{1}^{-1}\right]^{\textrm{fill}} +  \left[S_{2}^{-1}\right]^{\textrm{fill}} - \left[S_{B}^{-1}\right]^{\textrm{fill}},
\end{equation}
where we simplified notation by setting $S_1 = S_{AB,AB}$, $S_2=S_{BC,BC}$, and $S_B=S_{B,B}$, also to clarify that we first take the submatrix and then invert it.
%\begin{equation*}
%\hat{K} = \left[S_{AB,AB}^{-1}\right]^{\textrm{fill}} +  \left[S_{BC,BC}^{-1}\right]^{\textrm{fill}} - \left[S_{BB}^{-1}\right]^{\textrm{fill}}.
%\end{equation*}
To prove~(\ref{eq_chordal_small}), it suffices to show that $\left(\hat{K}^{-1}\right)_G = S_G$, since $\hat{K}_{i,j}=0$ for all $(i,j)\notin E^*$. We first expand $\hat{K}$ and then use Schur complements to compute its inverse:
\begin{equation}
\label{eq_cliques}
\hat{K} = \begin{pmatrix}
\left(S_{1}^{-1}\right)_{A,A} & \left(S_{1}^{-1}\right)_{A,B} & 0 \\  \left(S_{1}^{-1}\right)_{B,A} & \left(S_{1}^{-1}\right)_{B,B} +\left(S_{2}^{-1}\right)_{B,B} - S_{B}^{-1} & \left(S_{2}^{-1}\right)_{B,C} \\ 0 & \left(S_{2}^{-1}\right)_{C,B} & \left(S_{2}^{-1}\right)_{C,C}
\end{pmatrix}.
\end{equation}
Denoting $\hat{K}^{-1}$ by $\hat{\Sigma}$ and using Schur complements, we obtain
$$\hat{\Sigma}_{AB,AB} = 
\begin{pmatrix}
\left(S_{1}^{-1}\right)_{A,A} & \left(S_{1}^{-1}\right)_{A,B} \\  \left(S_{1}^{-1}\right)_{B,A} & \left(S_{1}^{-1}\right)_{B,B} +\left(S_{2}^{-1}\right)_{B,B} - S_{B}^{-1} - \left(S_{2}^{-1}\right)_{B,C} \left((S_{2}^{-1})_{C,C}\right)^{-1} \left(S_{2}^{-1}\right)_{C,B}.
\end{pmatrix}^{-1}$$
Note that by using Schur complements once again,
$$\left(S_{2}^{-1}\right)_{B,B} - \left(S_{2}^{-1}\right)_{B,C} \left((S_{2}^{-1})_{C,C}\right)^{-1} \left(S_{2}^{-1}\right)_{C,B} = S_{B}^{-1},$$
and hence $\hat{\Sigma}_{AB,AB} = S_1$. Analogously, it follows that $\hat{\Sigma}_{BC,BC} = S_2$, implying that $\hat{\Sigma}_G = S_G$. The more general formula for the MLE of chordal Gaussian graphical models in (\ref{eq_MLE_chordal}) is obtained by induction and repeated use of (\ref{eq_cliques}).

A stronger property than existence of a closed-form solution for the MLE is to ask which Gaussian graphical models have rational formulas for the MLE in terms of the entries of the sample covariance matrix. An important observation is that the number of critical points to the likelihood equations is constant for generic data, i.e., it is constant with probability~1 (it can be smaller on a measure zero subspace). The number of solutions to the likelihood equations for generic data, or equivalently, the maximum number of solutions to the likelihood equations, is called the \emph{maximum likelihood degree} (\emph{ML degree}). Hence, a model has a rational formula for the MLE if and only if it has ML degree 1. It was shown in~\cite{Sturmfels_Uhler} that the ML degree of a Gaussian graphical model is 1 if and only if the underlying graph is chordal. The ML degree of the 4-cycle can easily be computed and is known to be~5; see~\cite[Example~2.1.13]{Oberwolfach} for some code on how to do the computation using the open-source computer algebra system \verb+Singular+~\cite{Singular}. It is conjectured in~\cite[Section 7.4]{Oberwolfach} that the ML degree of the cycle grows exponentially in the cycle length, namely as $(p-3)2^{p-2}+1$, where $p\geq 3$ is the cycle length.

Since the likelihood function is strictly concave for Gaussian graphical models, this implies that even when the ML degree is larger than 1, there is still a unique local maximum of the likelihood function. As a consequence, while there are multiple complex solutions to the ML equations for non-chordal graphs, there is always a unique solution that is real and results in a positive definite matrix.

\section{Learning the underlying graph}
\label{sec:graph_learning}

Until now we have assumed that the underlying graph is given to us. In this section, we present methods for learning the underlying graph. We here only provide a short overview of some of the most prominent methods for model selection in Gaussian graphical models; for more details and for practical examples, see~\cite{graphical_models_R}.

A popular method for performing model selection is to take a stepwise approach. We start in the empty graph (or in the complete graph) and run a forward search (or a backward search). We cycle through the possible edges and add an edge (or remove an edge) if it decreases some criterion. Alternatively, one can also search for the edge which minimizes some criterion and add (or remove) this edge, but this is considerably slower. Two popular objective functions are the \emph{Akaike information criterion} (AIC) and the \emph{Bayesian information criterion} (BIC)~\cite{AIC, BIC}. These criteria are based on penalizing the likelihood according to the model complexity, i.e.
\begin{equation}
\label{eq_penalty}
-2\ell +\lambda |E|,
\end{equation}
where $\ell$ is the log-likelihood function, $\lambda$ is a parameter that penalizes model complexity, and $|E|$ denotes the number of edges, or equivalently, the number of parameters in the model. The AIC is defined by choosing $\lambda=2$, whereas the BIC is defined by setting $\lambda=\log(n)$ in~(\ref{eq_penalty}). 

Alternatively, one can also use significance tests for testing whether a particular partial correlation is zero and removing the corresponding  edge accordingly. A hypothesis test for zero partial correlation can be built based on Fisher's z-transform~\cite{Fisher15}: For testing whether $K_{i,j}=0$, let $A=\{i,j\}$ and $B=V\setminus A$. In Proposition~\ref{prop_Gaussian} we saw that $K_{A,A}^{-1} = \Sigma_{A\mid B}$. Hence testing whether $K_{i,j}=0$ is equivalent to testing whether the correlation $\rho_{i,j\mid B}$ is zero. The sample estimate of $\rho_{i,j\mid B}$ is given by
$$\hat{\rho}_{i,j\mid B} = S_{i,j} - S_{i,B} S_{B,B}^{-1} S_{B,j}.$$ 
Fisher's z-transform is defined by 
$$\hat{z}_{i,j \mid B} = \frac{1}{2} \log \biggr (  \frac{1+\hat{\rho}_{i,j \mid B}}{1-\hat{\rho}_{i,j \mid B}}\biggr).$$ 
Fisher~\cite{Fisher15} showed that using the test statistic $T_n = \sqrt{n - p+2 - 3 |\hat{z}_{i,j\mid B}|}$ with a rejection region $R_n  = (-\Phi^{-1}(1 -\alpha/2), \Phi^{-1}(1 -\alpha/2))$, where $\Phi$ denotes the cumulative distribution function of $\mathcal{N}(0,1)$, leads to a test of size $\alpha$.

A problem with stepwise selection strategies is that they are impractical for large problems or only a small part of the relevant search space can be covered during the search. A simple alternative, but a seemingly naive method for model selection in Gaussian graphical models, is to set a specific threshold for the partial correlations and remove all edges corresponding to the partial correlations that are less than the given threshold. This often works well, but a disadvantage is that the resulting estimate of the inverse covariance matrix might not be positive definite. 

An alternative is to use the \emph{glasso} algorithm~\cite{Friedman_2008}. It is based on maximizing the $\ell_1$-penalized log-likelihood function, i.e.
$$\ell_{\textrm{pen}}(K) = \log \det (K) - \textrm{tr}(KS) -\lambda |K|_1,$$
where $\lambda$ is a non-negative parameter that penalizes model complexity and $|K|_1$ is the sum of the absolute values of the off-diagonal elements of the concentration matrix. The use of $|K|_1$ is a convex proxy for the number of non-zero elements of $K$ and allows efficient optimization of the penalized log-likelihood function by convex programming methods such as interior point algorithms or coordinate descent approaches similar to the ones discussed in Section~\ref{sec:MLE_alg}; see e.g.~\cite{Mazumder_Hastie}. A big advantage of using  $\ell_1$-penalized maximum likelihood estimation for model selection in Gaussian graphical models is that it can also be applied in the high-dimensional setting and comes with structural recovery guarantees~\cite{Ravikumar_2011}. Various alternative methods for learning high-dimensional Gaussian graphical models have been proposed that have similar guarantees, including node-wise regression with the lasso~\cite{Meinshausen_Buehlmann}, a constrained $\ell_1$-minimization approach for inverse matrix estimation (CLIME)~\cite{Cai_CLIME}, and a testing approach with false discovery rate control~\cite{Liu_2013}. %Graphical models in the high-dimensional setting are discussed in detail in Chapter 9.

\section{Other Gaussian models with linear constraints}
\label{sec:linear_Gaussian}

Gaussian graphical models are Gaussian models with particular equality constraints on the concentration matrix, namely where some of the entries are set to zero. We end by giving an overview on other Gaussian models with linear constraints.

Gaussian graphical models can be generalized by introducing a vertex and edge coloring: Let $G=(V, E)$ be an undirected graph, where the vertices are colored with $s$ different colors and the edges with $t$ different colors. This leads to a partition of the vertex and edge set into color classes, namely,
$$V =V_1 \cup V_2 \cup V_s, \;\;s\leq p, \quad \textrm{and} \quad E = E_1 \cup E_2 \cup \cdots \cup E_t, \;\;t \leq |E|.$$
An \emph{RCON} model on $G$ is a Gaussian graphical model on $G$ with some additional equality constraints, namely that $K_{i,i}=K_{j,j}$ if $i$ and $j$ are in the same vertex color class and $K_{i,j} = K_{u,v}$ if $(i,j)$ and $(u,v)$ are in the same edge color class. Hence a Gaussian graphical model on a graph $G$ is an RCON model on $G$, where each vertex and edge has a separate color.  

Determining the MLE for RCON models leads to a convex optimization problem and the corresponding dual optimization problem can be readily computed:
 \begin{equation*}
\label{opt_dual_RCON}
\begin{aligned}
& \underset{\Sigma\succeq 0}{\text{minimize}}
& &\;\,-\log\det \Sigma - p &\\
& \text{subject to}
& & \;\,\,\sum_{\alpha\in V_i} \Sigma_{\alpha,\alpha} = \sum_{\alpha\in V_i} S_{\alpha,\alpha}, & \textrm{for all } 1\leq i\leq s,\\  
& && \sum_{(\alpha,\beta)\in E_j} \Sigma_{\alpha,\beta} = \sum_{(\alpha, \beta)\in E_j} S_{\alpha,\beta}, & \textrm{for all } 1\leq j \leq t.  
\end{aligned}
\end{equation*}
This shows that the constraints for existence of the MLE in an RCON model on a graph~$G$ are relaxed as compared to a Gaussian graphical model on $G$; namely, in an RCON model the constraints are only on the sum of the entries in a color class, whereas in a Gaussian graphical model the constraints are on each entry.

RCON models were introduced by H{\o}jsgaard and Lauritzen in~\cite{RCON}. These models are useful for applications, where symmetries in the underlying model can be assumed. Adding symmetries reduces the number of parameters and in some cases also the number of observations needed for existence of the MLE. For example, defining $G$ to be the 4-cycle and having only one vertex color class and one edge color class (i.e., we color each vertex in the same color and each edge in the same color), then one can show that the MLE already exists for 1 observation with probability 1. This is in contrast to the result that $\textrm{mlt}(G)=3$ for cycles as shown in Section~\ref{sec:MLE_existence}. For further examples see~\cite{RCON, Uhler_2012}.

More general Gaussian models with linear equality constraints on the concentration matrix or the covariance matrix were introduced by Anderson~\cite{andersonLinearCovariance}. He was motivated by the linear structure of covariance and concentration matrices resulting from various time series models. As pointed out in Section~\ref{sec:likelihood}, the Gaussian likelihood as a function of $\Sigma$ is not concave over the whole cone of positive definite matrices. Hence maximum likelihood estimation for Gaussian models with linear constraints on the covariance matrix in general does not lead to a convex optimization problem and has many local maxima. Anderson proposed iterative procedures for calculating the MLE for such models, such as the Newton-Raphson method~\cite{andersonLinearCovariance} and a scoring method~\cite{anderson73}.

As mentioned in Section~\ref{sec:likelihood}, while not being concave over the whole cone of positive definite matrices, the Gaussian likelihood as a function of $\Sigma$ is concave over a large region of $\mathbb{S}^p_{\succ 0}$, namely for all $\Sigma$ that satisfy $\Sigma - 2S\in\mathbb{S}^p_{\succ 0}$. This is useful, since it was shown in~\cite{linear_Gaussian} that the MLE for Gaussian models with linear equality constraints on the covariance matrix lies in this region with high probability as long as the sample size is sufficiently large ($n \simeq 14 p$). Hence in this regime, maximum likelihood estimation for linear Gaussian covariance models behaves as if it were a convex optimization problem.

Similarly as we posed the question for Gaussian graphical models in Section~\ref{sec:MLE_alg}, one can ask when the MLE of a linear Gaussian covariance model has a closed form representation. Szatrowski showed in~\cite{Szatrowski1980, Szatrowski1985} that the MLE for linear Gaussian covariance models has an explicit representation if and only if $\Sigma$ and $\Sigma^{-1}$ satisfy the same linear constraints. This is equivalent to requiring that the linear subspace $\mathcal{L}$, which defines the model, forms a Jordan algebra, i.e., if $\Sigma\in\mathcal{L}$ then also $\Sigma^2\in\mathcal{L}$~\cite{jensen1988}. Furthermore, Szatrowski proved that for this model class Anderson's scoring method~\cite{anderson73} yields the MLE in one iteration when initiated at any positive definite matrix in the model. 

Linear inequality constraints on the concentration matrix also lead to a convex optimization problem for ML estimation. An example of such models are Gaussian distributions that are \emph{multivariate totally positive of order two} (MTP$_2$). This is a form of positive dependence, which for Gaussian distributions implies that $K_{i,j}\leq 0$ for all $i\neq j$. Gaussian MTP$_2$ distributions were studied by Karlin and Rinott~\cite{karlinGaussian} and more recently in~\cite{MTP2_Gaussian, slawski2015estimation} from a machine learning and more applied perspective. It was shown in~\cite{Fallat_2016} that MTP$_2$ distributions have remarkable properties with respect to conditional independence constraints. In addition, for such models the spanning forest of the sample correlation matrix is always a subgraph of the maximum likelihood graph, which can be used to speed up graph learning algorithms~\cite{MTP2_Gaussian}. Furthermore, the MLE for MTP$_2$ Gaussian models exists already for 2 observations with probability~1~\cite{slawski2015estimation}. These properties make MTP$_2$ Gaussian models interesting for the estimation of high-dimensional graphical models.

We end by referring to Pourahmadi~\cite{Pourahmadi} for a comprehensive review of covariance estimation in general and a discussion of numerous other specific covariance matrix constraints.

\bibliographystyle{plain}
\bibliography{biblio}

\bigskip \bigskip

\noindent
{\bf Acknowledgements.} Caroline Uhler was partially supported by DARPA (W911NF-16-1-0551), NSF (DMS-1651995) and ONR (N00014-17-1-2147).

\bigskip
\bigskip

\noindent
{\bf Authors' addresses:}

\smallskip

\noindent Caroline Uhler,
Laboratory for Information and Decision Systems,
Department of Electrical Engineering and Computer Science,
Institute for Data, Systems and Society,
Massachusetts Institute of Technology,
            {\tt cuhler@mit.edu}.

\end{document}